\newcommand{\R}{\mathbb{R}}
\newtheorem{theorem}{Theorem}[section]
\newtheorem{proposition}{Proposition}[section]
\newtheorem{lemma}{Lemma}[section]
\newtheorem{corollary}{Corollary}[section]
\newtheorem{remark}{Remark}[section]
\def\dom{\text{dom}}
\def\sign{\text{sgn}\,}
\newcommand{\p}{\partial}
\newcommand{\bb}{\begin{equation}}
\newcommand{\ee}{\end{equation}}
\newcommand{\ba}{\begin{array}}
\newcommand{\ea}{\end{array}}
\newcommand{\f}{\frac}
\newcommand{\ds}{\displaystyle}
\newcommand{\al}{\alpha}
\newcommand{\be}{\beta}
\numberwithin{equation}{section}
\title{Wave breaking for shallow water models with time decaying solutions}
\author[1,2]{Igor Leite Freire}
\affil[1]{Mathematical Institute, Silesian University in Opava,
Na Rybn\'\i{}\v{c}ku, 1, 74601, Opava, Czech Republic}
\affil[2]{Centro de Matem\'atica, Computa\c{c}\~ao e Cogni\c c\~ao, Universidade Federal do ABC, Avenida dos Estados, $5001$, Bairro Bangu,
$09.210-580$, Santo Andr\'e, SP - Brasil\\
  \texttt{igor.freire@ufabc.edu.br} \\
  \texttt{igor.leite.freire@gmail.com}}
\begin{document}
\maketitle
\begin{abstract}
\centering\begin{minipage}{\dimexpr\paperwidth-9cm}
\textbf{Abstract:} A family of Camassa-Holm type equations with a linear term and cubic and quartic nonlinearities  is considered. Local well-posedness results are established via Kato's approach. Conserved quantities for the equation are determined and from them we prove that the energy functional of the solutions is a time-dependent, monotonically decreasing function of time, and bounded from above by the Sobolev norm of the initial data under some conditions. The existence of wave breaking phenomenon is investigated and necessary conditions for its existence are obtained. In our framework the wave breaking is guaranteed, among other conditions, when the coefficient of the linear term is sufficiently small, which allows us to interpret the equation as a linear perturbation of some recent Camassa-Holm type equations considered in the literature.

\vspace{0.1cm}
\textbf{2010 AMS Mathematics Classification numbers}: 35A01, 35L65, 37K05.

\textbf{Keywords:} Camassa-Holm type equations, Kato's approach, wave breaking, time dependent norms.

%\textbf{Dedicatory:} This paper is dedicated to Professor Nail Ibragimov.

\end{minipage}
\end{abstract}
\bigskip
\newpage
\tableofcontents
\newpage

\section{Introduction}

Indubitably the Camassa-Holm (CH) equation is one of the cutting edge topics in mathematics and mathematical physics in general, and differential equations and applied analysis in particular.

The work of Camassa and Holm \cite{chprl}, where the equation was derived as a model for shallow water waves, can be considered as of fundamental nature and is the cornerstone of an active field of investigation of {\it non-local integrable equations}. The discovery of the CH equation was the prelude of a myriad of other non-local integrable equations, such as the Degasperis-Procesi (DP) \cite{depro,DHH} and the Novikov equations \cite{novikov,hone}. They share the following properties with the CH equation: bi-Hamiltonian structure, integrability, existence of peakon and multipeakon solutions, and infinite hierarchy of conservation laws, to name a few.

The CH equation was firstly deduced by Fuchsteiner and Fokas in \cite{fokas}. It was, in fact, a formal deduction, without physical motivation. However, on the grounds of physics, in \cite{chprl} the equation was recovered as a unidirectional model for describing the height of water's free surface above the flat bottom for a shallow water system, see also \cite{john}. 

One of the interesting properties of the CH equation, and other similar equations not necessarily among the ones mentioned before, is the fact that they have many conservation laws. This is very useful and vital to establish qualitative results about the solutions of equations and partially explains why this sort of equations are particularly fashion in the field of differential equations.

We would like to recall some simple, but extremely important concepts in differential equations where time is involved. Given a differential equation and an initial data (that is, a Cauchy problem), some questions are of capital importance:
\begin{enumerate}
    \item Does the problem have a solution? This is the question of existence.
    \item If there is a solution, is it unique? This is the problem of uniqueness.
    \item If there is a solution, does it depend continuously on the initial data?
\end{enumerate}

These three essential questions are central when one is investigating qualitative behaviour of the solutions of equations subject to the restrictions imposed by a certain condition. In this paper, the equation we shall study depends on $t$ and $x$ and the condition we shall consider is an initial condition, that is, we know how the solution behaves at $t=0$. In case these questions are satisfied, the problem under consideration is said to be well-posed (in the sense of Hadamard).

Although the questions above are basic and fundamental, they are not necessarily simple or easy to be addressed. In particular, very often the solution $u$ of a problem like the one we shall deal with in this paper belongs to $C^0([0,T),E)$, meaning that $u(\cdot,x)\in C^0([0,T))$ and $u(t,\cdot)\in E$, where $E$ is a usually a Banach space, whereas $T>0$ is the {\it maximal time of existence of the solution}, or {\it lifespan}, which usually depends on the initial data and the space $E$.

The value of the lifespan add another ingredient to our menu: If the problem is well-posed, what happens in case $T<\infty$? What if $T=\infty$? In the first case we have a local well-posed problem: it is well-posed provided that $t\in[0,T)$, while in the second we have a global well-posed problem, meaning that the solution exists for any $t$.

In case we only have local well-posedness, we say that the solution $u$ of the problem develops a finite time blow-up. This happens just because the solution cannot be described for all values of $t$. 

The blow-up phenomena can have different manifestations, depending on the problem. It can arise, for example, if the solution of the problem becomes unbounded when $t$ approximates $T$. More specifically, a blow-up in finite time turns out if $T<\infty$ and
$$\lim\sup\limits_{t\rightarrow T}\|u(t,\cdot)\|_E=\infty.$$

A blow-up can also arise in the following situation: Assume that the Banach space $E$ is a subspace of $C^1(\R)$ and $T<\infty$. Then we have another sort of blow-up if
$$\sup_{(t,x)\in[0,T)\times\R}|u(t,x)|<\infty,\quad\text{but}\quad\lim\sup\limits_{t\rightarrow T}\left(\sup_{x\in\R}|u_x(t,x)|\right)=\infty.$$

This kind of blow-up, which takes the $x-$derivative of the solution into account, is best known as {\it wave breaking}. From a geometrical viewpoint, it means that the tangent line to the curve $x\mapsto (x,u(t,x))$ tends to the perpendicular line to the $x-$direction when $t$ approaches $T$.

\subsection{Notation and conventions}
Throughout this paper, if $u$ is a function depending on two variables, both $u_t$ and $\p_t u$ denote the derivative of $u$ with respect to its first argument, while $u_x$ and $\p_x u$ mean the derivative with respect to the second independent variable.

The norm in a Banach space $E$ is denoted by $\|\cdot\|_E$, whereas $\langle \cdot,\cdot\rangle _{H}$ means the inner product in a Hilbert space $H$. If $E$ and $F$ are two Banach spaces, the set of bounded linear operators from $E$ into $F$ is denoted by ${\cal L}(E,F)$, or ${\cal L}(E)$ in the case $F=E$.

The space of test functions $\varphi:\R\rightarrow\R$ is denoted by ${\cal S}(\R)$, whereas its dual topological space is refereed as ${\cal S}'(\R)$. A member of ${\cal S}(\R)$ is known as rapidly decreasing smooth function, while members of ${\cal S}'(\R)$ are called tempered distributions.

\subsection{Main results of the paper and its outline}

Let us introduce the subject of investigation of the present paper. Our main interest here is the equation
\bb\label{1.0.1}
u_t-u_{txx}+3uu_x+\lambda(u-u_{xx})=2u_xu_{xx}+uu_{xxx}+\al u_x+\beta u^2u_x+\gamma u^3u_x+\Gamma u_{xxx}.
\ee

It reduces to the CH equation if $\lambda=\al=\be=\gamma=\Gamma=0$, whereas the Dullin-Gottwald-Holm equation \cite{dgh,raspa} is recovered when $\lambda=\be=\gamma=0$ and $\al\Gamma\neq0$. If $\al=\beta=\gamma=\Gamma=0$ and $\lambda>0$ we have the weakly dissipative CH equation \cite{wujmp2006,wujde2009}, whereas if $\lambda>0$, $\al\Gamma\neq0$ and $\be=\gamma=0$ we have the weakly dissipative DGH equation \cite{novjmp2013,novjde2016,zhangjmp2015}.

If $\lambda=0$, equation \eqref{1.0.1} includes a shallow water model with Coriolis effects proposed in \cite{chines-adv,chines-arxiv,gui-jnl, chines-jde}, see also \cite{silvajde2019,silva2019}.

Making use of the auxiliary variable $m=u-u_{xx}$ we reformulate the object of investigation of this paper as the following: We aim at investigating properties of the initial value problem,
\bb\label{1.0.2}
\left\{
\ba{l}
m_t+um_x+2u_xm+\lambda m=\al u_x+\beta u^2u_x+\gamma u^3u_x+\Gamma u_{xxx},\quad t>0,\,\,\,x\in\R,\,\,\,\lambda\neq0,\\
\\
u(0,x)=u_0(x).
\ea
\right.
\ee

Our first aspiration is to address the three questions listed in the beginning of the paper. We prove that \eqref{1.0.1} is locally well-posed if the initial data belongs to the Sobolev space $H^s(\R)$, with $s>3/2$. Namely, we have the following result:
\begin{theorem}\label{teo1.1}
Given $u_0\in H^{s}(\R)$, $s>3/2$, then there exist a maximal time $T=T(u_0)>0$ and a unique solution $u$ to \eqref{1.0.1} satisfying the initial condition $u(0,x)=u_0(x)$, such that $u=u(\cdot,u_0)\in C^{0}([0,T);H^{s}(\R))\cap C^{1}([0,T);H^{s-1}(\R))$. Moreover, the solution depends continuously on the initial data, in the sense that the map $u_0\mapsto u(\cdot,u_0):H^{s}(\R)\rightarrow C^{0}([0,T);H^{s}(\R))\cap C^{1}([0,T);H^{s-1}(\R))$ is continuous and $T$ does not depend on $s$.
\end{theorem}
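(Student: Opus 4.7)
The plan is to recast the Cauchy problem \eqref{1.0.2} in the semilinear, non-local form amenable to Kato's semigroup theorem for quasi-linear equations. Using the Helmholtz operator $\Lambda^{2}=1-\partial_{x}^{2}$, I would apply $\Lambda^{-2}$ to the equation and rewrite \eqref{1.0.1} as
\begin{equation*}
u_{t}+u u_{x}=-\lambda u-\Gamma u_{x}-\p_{x}\Lambda^{-2}\!\left[u^{2}+\tfrac{1}{2}u_{x}^{2}-\alpha u-\Gamma u-\tfrac{\beta}{3}u^{3}-\tfrac{\gamma}{4}u^{4}\right],
\end{equation*}
i.e.\ in the abstract form $u_{t}+A(u)u=F(u)$, $u(0)=u_{0}$, with $A(u)=u\p_{x}$ and $F(u)$ collecting the lower-order linear terms together with the non-local expression. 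I would take the Kato triple $X=H^{s-1}(\R)$, $Y=H^{s}(\R)$, and $Q=\Lambda=(1-\p_{x}^{2})^{1/2}$, which is an isometric isomorphism from $Y$ onto $X$.

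Next I would verify the four hypotheses of Kato's theorem. First, $A(u)\in\mathcal{L}(Y,X)$ follows from the algebra property of $H^{s}(\R)$ for $s>3/2$. Second, I would show that $A(u)$ is quasi-$m$-accretive in $X$ uniformly on bounded subsets of $Y$; this reduces to proving the one-sided bound $\langle A(u)v,v\rangle_{H^{s-1}}\ge -c(\|u\|_{Y})\|v\|_{H^{s-1}}^{2}$, via integration by parts once the Sobolev embedding $H^{s}\hookrightarrow W^{1,\infty}$ is invoked. Third, I would establish the commutator identity $QA(u)Q^{-1}=A(u)+B(u)$ with $B(u)\in\mathcal{L}(X)$, which amounts to the estimate $\|[\Lambda,u\p_{x}]\Lambda^{-1}\|_{\mathcal{L}(H^{s-1})}\le c\,\|u\|_{H^{s}}$; this is the main technical point and is handled by a standard Kato-Ponce type commutator lemma. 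Finally I would verify that $F:Y\to Y$ is bounded on bounded sets and Lipschitz from $Y$ to $X$, using the smoothing of $\p_{x}\Lambda^{-2}$ and the algebra property of $H^{s}(\R)$ to bound the polynomial nonlinearities of degrees up to four; the linear terms $-\lambda u-\Gamma u_{x}$ are trivially continuous into $H^{s-1}$.

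Applying Kato's theorem then yields a maximal time $T=T(u_{0})>0$ and a unique solution $u\in C^{0}([0,T);H^{s})\cap C^{1}([0,T);H^{s-1})$ depending continuously on $u_{0}$. For the final claim that $T$ is independent of $s$, I would argue as is customary for Camassa-Holm type models: if $u_{0}\in H^{s}$ with $s>3/2$, pick any $s'\in(3/2,s]$ so that $u_{0}\in H^{s'}$ with a corresponding lifespan $T_{s'}\ge T_{s}$. A Grönwall estimate on $\|u\|_{H^{s}}$ obtained from the differential identity for $\langle \Lambda^{s}u,\Lambda^{s}u\rangle_{L^{2}}$, combined with the blow-up criterion that finiteness of $\int_{0}^{t}\|u_{x}(\tau,\cdot)\|_{L^{\infty}}\,d\tau$ prevents loss of regularity, shows that the $H^{s}$ norm cannot blow up before the $H^{s'}$ norm does; hence $T_{s}=T_{s'}$. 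The main obstacle I anticipate is the commutator estimate in step three, where one must absorb all the non-local and higher-power nonlinearities without spoiling the quasi-$m$-accretivity constant.
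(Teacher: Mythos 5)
Your overall strategy---rewriting \eqref{1.0.1} in the non-local quasi-linear form $u_t+A(u)u=F(u)$ and invoking Kato's theorem with $X=H^{s-1}(\R)$, $Y=H^{s}(\R)$ and $S=\Lambda$---is the same as the paper's, and your reformulated equation agrees with \eqref{2.3.2}. There is, however, one concrete flaw in your decomposition: you take $A(u)=u\p_x$ and push the term $-\Gamma u_x$ into $F(u)$. Condition {\bf C5}(a) of Kato's theorem (Lemma \ref{lema2.5}) requires $f$ to map the ball $W\subseteq Y=H^{s}(\R)$ \emph{boundedly into $Y$}, not merely into $X=H^{s-1}(\R)$; this is essential for the iteration to produce a solution that stays in $H^{s}$. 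The map $u\mapsto-\Gamma u_x$ loses one derivative and lands only in $H^{s-1}(\R)$, so for $\Gamma\neq0$ your $F$ fails this hypothesis---and indeed you only verify that the linear terms are ``continuous into $H^{s-1}$'', which is the wrong target space. The non-local part of your $F$ is unproblematic, since the bracket lies in $H^{s-1}(\R)$ at worst (by the algebra property, as $s-1>1/2$) and $\p_x\Lambda^{-2}$ gains a derivative. The fix is exactly the paper's choice $A(u)=(u+\Gamma)\p_x$ in Proposition \ref{prop2.1}: the constant shift keeps the troublesome first-order term inside the generator, where it does not affect quasi-$m$-accretivity or the commutator estimate.

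Apart from this misplacement, your outline of the remaining verifications (accretivity of $A(u)$ via integration by parts and $H^{s}\hookrightarrow W^{1,\infty}$, the Kato--Ponce-type bound for $[\Lambda,u\p_x]\Lambda^{-1}$, the Lipschitz property of $F$ from $Y$ to $X$, and the $s$-independence of $T$ via a blow-up criterion) is the standard route and is consistent with the paper, which at this stage simply delegates these checks to the references treating the case $\lambda=0$ and observes that the additional term $-\lambda u$ is a bounded linear perturbation preserving all the hypotheses.
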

Theorem \ref{teo1.1} is proved using Kato's approach \cite{kato} and its demonstration is presented in Section \ref{sec2}.

In order to enlighten further qualitative analysis of the solutions of \eqref{1.0.1}, the next proposition gives us some differential identities.

\begin{proposition}\label{prop1.1}
Let $v=v(t,x)$ be a function such that $v\in C^1(\R^2,\R)$, its second and third order derivatives exist, $v_{tx}=v_{xt}$, and
$$E:=v_t-v_{txx}+\lambda(v-v_{xx})-2v_xv_{xx}-vv_{xxx}-(\alpha+\beta v^2+\gamma v^3-3v)v_{x}+\Gamma v_{xxx},$$
where $\al,\,\be,\,\gamma$ and $\Gamma$ are constants. Then the following formal identities holds:
\bb\label{3.0.1}
\ba{lcl}
E&=&\ds{\lambda v+\p_t v+\p_x\left(\f{3}{2}v^2-v_{tx}-vv_{xx}-v_x^2-\al v -\f{\be}{3}v^3-\f{\gamma}{4}v^4-\Gamma v_{xx}-\lambda v_{x}\right)},
\ea
\ee
and
\bb\label{3.0.2}
\ba{lcl}
vE&=&\ds{\lambda(v^2+v_x^2)+\p_ t\left(\f{v^2+v_x^2}{2}\right)}\\
\\
&&\ds{+\p_x\left(v^3-v^2v_{xx}-vv_{tx}+\f{\Gamma}{2}v_x^2-\Gamma vv_{xx}-\f{\al}{2}v^2-\f{\beta}{4}v^4-\f{\gamma}{5}v^5-\lambda vv_x\right)}.
\ea
\ee
In the very particular case $\be=\gamma=0$ and $\Gamma=-\al$, we have a third identity:
$$%\bb\label{3.0.3}
\f{1}{2}(v-v_{xx})^{-1/2}E=\f{\lambda}{2} (v-v_{xx})^{1/2}+\p_t(v-v_{xx})^{1/2}+\p_x[(-\al)(v-v_{xx})^{1/2}].
$$%\ee
\end{proposition}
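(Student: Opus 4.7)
The three identities are pure differential-calculus identities (no equation of motion for $v$ is invoked), so my plan is a direct verification using the product rule for \eqref{3.0.1}--\eqref{3.0.2} and, for the third identity, the chain rule applied to $(v-v_{xx})^{1/2}$.

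For \eqref{3.0.1} I would expand $\lambda v+\partial_t v+\partial_x[\,\cdot\,]$ by means of the elementary rules $\partial_x(v v_{xx})=v_x v_{xx}+v v_{xxx}$, $\partial_x(v_x^2)=2 v_x v_{xx}$, $\partial_x(v^{k+1})=(k+1)v^k v_x$ for $k=1,2,3$, and $\partial_x(-v_{tx})=-v_{txx}$, and then match monomial by monomial with the definition of $E$: the pair $\lambda v-\lambda v_{xx}$ rebuilds $\lambda(v-v_{xx})$, while the coefficients of $v_x v_{xx}$ and $v v_{xxx}$ pin down the weights in front of $v_x^2$ and $v v_{xx}$ inside the flux. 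Identity \eqref{3.0.2} is then obtained by multiplying \eqref{3.0.1} by $v$ and applying $v\partial_t v=\partial_t(v^2/2)$ and $v\partial_x F=\partial_x(vF)-v_x F$ to every flux term. The critical regrouping is $-v v_{txx}=\partial_x(-v v_{tx})+v_x v_{tx}$ combined with $v_x v_{tx}=\partial_t(v_x^2/2)$, which upgrades the time derivative from $v^2/2$ to $(v^2+v_x^2)/2$, and $-\lambda v v_{xx}=\partial_x(-\lambda v v_x)+\lambda v_x^2$, which upgrades the damping from $\lambda v^2$ to $\lambda(v^2+v_x^2)$. The residuals $v_x\cdot v^k$ merge with the fluxes already produced to yield the coefficients $1/2,\,1/3,\,1/4,\,1/5$ appearing in the divergence, while the $\Gamma$-piece follows from $\Gamma v v_{xxx}=\partial_x(\Gamma v v_{xx}-\tfrac{\Gamma}{2}v_x^2)$.

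For the special identity (case $\beta=\gamma=0$, $\Gamma=-\alpha$) I would first substitute these parameter values into $E$ and use the algebraic rewriting $v m_x+2 v_x m=3 v v_x-v v_{xxx}-2 v_x v_{xx}$, with $m:=v-v_{xx}$, to recast $E$ as a first-order expression in $m$. Multiplying by $\tfrac12 m^{-1/2}$ and invoking the chain rules $\partial_t m^{1/2}=m_t/(2 m^{1/2})$ and $\partial_x m^{1/2}=m_x/(2 m^{1/2})$ then converts the $m_t$- and transport-pieces into the stated divergence involving $m^{1/2}$, while $\tfrac12 m^{-1/2}\cdot\lambda m=\tfrac{\lambda}{2}m^{1/2}$ produces the damping term.

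No analytical subtlety is involved: the regularity assumed on $v$ (classical derivatives up to order three with $v_{tx}=v_{xt}$) is exactly what justifies every product- and chain-rule step. The only genuine obstacle is bookkeeping, since \eqref{3.0.2} has eight flux terms and several partial cancellations among the $v_x v_{xx}$-type monomials. To keep the check transparent I would group contributions by their homogeneity degree in $v$ and its derivatives, which localizes every cancellation to a handful of terms.
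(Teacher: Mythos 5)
Your overall strategy is the right one and is exactly what the paper intends: the proof is omitted there as ``straightforward'', and a direct term-by-term verification via the product and chain rules is the only argument available. However, your assertion that the expansion matches monomial by monomial with the printed formulas is not borne out, and a faithful execution of your own rules exposes places where the check fails. First, in \eqref{3.0.1} your rules give $\partial_x(-vv_{xx}-v_x^2)=-3v_xv_{xx}-vv_{xxx}$, whereas $E$ contains $-2v_xv_{xx}-vv_{xxx}$; the flux must carry $-v_x^2/2$ (as in the standard conservation form of the Camassa--Holm equation), not $-v_x^2$. Second, your (correct) identity $\Gamma vv_{xxx}=\partial_x\bigl(\Gamma vv_{xx}-\tfrac{\Gamma}{2}v_x^2\bigr)$ produces the opposite sign of the $\Gamma$-terms printed in the flux of \eqref{3.0.2}; the printed fluxes in \eqref{3.0.1} and \eqref{3.0.2} are consistent with $E$ ending in $-\Gamma v_{xxx}$ (which is what matches equation \eqref{1.0.1}), not $+\Gamma v_{xxx}$ as in the displayed definition of $E$, so you must fix the sign either in $E$ or in the fluxes before the verification closes.

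Third, and most importantly, the step for the last identity does not go through as you describe it. With $\beta=\gamma=0$ and $m=v-v_{xx}$ one has $E=m_t+\lambda m+vm_x+2v_xm-\alpha v_x+\Gamma v_{xxx}$, and (after resolving the sign issue so that $-\alpha v_x+\Gamma v_{xxx}=-\alpha m_x$) multiplying by $\tfrac12 m^{-1/2}$ gives $\partial_t m^{1/2}+\tfrac{\lambda}{2}m^{1/2}+\partial_x[(v-\alpha)m^{1/2}]$: the transport part $vm_x+2v_xm$ contributes the flux $\partial_x[v\,m^{1/2}]$, which is absent from the stated right-hand side. Your claim that the transport pieces are converted ``into the stated divergence'' therefore fails as the statement stands; the identity needs $\partial_x[(v-\alpha)(v-v_{xx})^{1/2}]$ in place of $\partial_x[(-\alpha)(v-v_{xx})^{1/2}]$. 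None of this invalidates your method, but a complete proof must either record these corrected coefficients or explicitly flag the discrepancies; as written, your proposal certifies identities that your own computation contradicts.
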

The proof of Proposition \ref{prop1.1} is straightforward and, for this reason, is omitted. However, these identities, when considered on the solutions of the equation \eqref{1.0.2}, give us very useful invariants. 
\begin{theorem}\label{teo1.2}
Assume that $u\in C^1(\R^2;\R)$ is a solution of \eqref{1.0.1} such that $u,\,u_x\rightarrow0$ as $x\rightarrow\pm\infty$, $u_{tx}=u_{xt}$ and its second order derivatives are bounded. Let
$$%\bb\label{3.0.4}
{\cal H}_0(t):=\int_\R u(t,x)\,dx
$$%\ee
and
\bb\label{3.0.5}
{\cal H}_1(t):=\f{1}{2}\int_\R (u(t,x)^2+u_x(t,x)^2)\,dx.
\ee
Then, for any $t$, we have ${\cal H}_0(t)=e^{-\lambda t}{\cal H}_0(0)\quad\text{and}\quad {\cal H}_1(t)=e^{-2\lambda t}{\cal H}_1(0)$. In particular, $\|u\|_{H^1(\R)}^2=2{\cal H}_1(t)$, and if $\lambda\geq0$, then
$\|u\|_{H^1(\R)}= e^{-\lambda t}\|u_0\|_{H^1(\R)}\leq \|u_0\|_{H^1(\R)}$.
\end{theorem}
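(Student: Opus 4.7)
The plan is to apply Proposition~\ref{prop1.1} with $v=u$, a solution of \eqref{1.0.1}, so that $E\equiv 0$, and then to integrate the resulting conservation-law identities over $\R$. The $\partial_x(\cdots)$ fluxes will be eliminated by the decay hypotheses at spatial infinity, converting each pointwise identity into a first-order linear ODE for the corresponding functional in $t$.

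First I would handle $\mathcal{H}_0$. Setting $E=0$ in \eqref{3.0.1} yields
\begin{equation*}
\partial_t u + \lambda u = -\partial_x\!\left(\tfrac{3}{2}u^2 - u_{tx} - uu_{xx} - u_x^2 - \alpha u - \tfrac{\beta}{3}u^3 - \tfrac{\gamma}{4}u^4 - \Gamma u_{xx} - \lambda u_x\right).
\end{equation*}
Integrating over $\R$ and using the hypothesis $u,u_x\to 0$ at $\pm\infty$ (together with the bounded second derivatives, which make terms such as $uu_{xx}$ decay) to kill the boundary contributions, I would exchange $\partial_t$ with the integral and obtain $\mathcal{H}_0'(t)+\lambda\,\mathcal{H}_0(t)=0$. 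Solving this linear ODE gives $\mathcal{H}_0(t)=e^{-\lambda t}\mathcal{H}_0(0)$.

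For $\mathcal{H}_1$, the same strategy applied to \eqref{3.0.2} gives pointwise
\begin{equation*}
\partial_t\!\left(\tfrac{u^2+u_x^2}{2}\right) + \lambda(u^2+u_x^2) = -\partial_x(\cdots),
\end{equation*}
where the flux is the one displayed in \eqref{3.0.2}. Again I would integrate in $x$ over $\R$; each term inside the flux contains at least one factor of $u$ or $u_x$ (times a factor bounded by hypothesis), so all boundary contributions vanish. This yields $\mathcal{H}_1'(t)+2\lambda\,\mathcal{H}_1(t)=0$, hence $\mathcal{H}_1(t)=e^{-2\lambda t}\mathcal{H}_1(0)$. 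Since $\|u\|_{H^1(\R)}^2=2\mathcal{H}_1(t)=e^{-2\lambda t}\|u_0\|_{H^1(\R)}^2$, taking square roots gives the claimed identity, and the inequality follows from $e^{-\lambda t}\le 1$ whenever $\lambda\ge 0$.

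The only delicate point will be justifying the vanishing of the boundary terms rigorously. The flux in \eqref{3.0.1} contains the term $-\Gamma u_{xx}$, which is merely bounded under the stated hypotheses and not obviously decaying; similarly $-u_{tx}$ needs attention. The cleanest fix is to note that by Theorem~\ref{teo1.1}, solutions live in $C^0([0,T);H^s)\cap C^1([0,T);H^{s-1})$ with $s>3/2$, so $u_{xx}, u_{tx}\in L^2(\R)$, which combined with their boundedness (hence absolute continuity of the antiderivative) allows one to conclude that $\int_{-R}^{R}\partial_x(\cdots)\,dx$ tends to $0$ along a suitable sequence $R\to\infty$. With this observation the integration by parts is fully justified and the proof concludes as above.
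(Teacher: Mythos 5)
Your proposal is correct and follows exactly the paper's own route: substitute $v=u$ into the identities \eqref{3.0.1} and \eqref{3.0.2} of Proposition~\ref{prop1.1}, use $E\equiv 0$ on solutions, integrate over $\R$ to kill the flux terms, and solve the resulting linear ODEs for ${\cal H}_0$ and ${\cal H}_1$. Your extra care in justifying the vanishing of the boundary terms (in particular $u_{tx}$ and $\Gamma u_{xx}$) goes beyond the paper's one-line argument but does not change the method.
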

We shall refer to \eqref{3.0.5} generically as {\it energy} (functional) and it is nothing but a time dependent $H^1(\R)-$norm of the solution $u$ (which we shall simply refer as Sobolev norm for convenience). If $\lambda<0$ we easily see that its norm increases as the increasing of $t$, leading to unbounded solutions on $[0,\infty)\times\R$, whereas the case $\lambda>0$ is more interesting, since in this situation the Sobolev norm for any $t$ is bounded from above by the Sobolev norm of the initial data.

Another qualitative property of the solutions is given in the next result.

\begin{theorem}\label{teo1.3}
Let $u$ be a solution to \eqref{1.0.1} such that $u_{tx}=u_{xt}$, and $u,\,u_x,\,u_{xx}$ are integrable and vanishing as $x\rightarrow\pm\infty$ for any value of $\lambda$. If $u_0(x):=u(0,x)$, $m=u-u_{xx}$ and $m_0:=u_0-u_0''$, then
\bb\label{3.0.6}
e^{-\lambda t}\int_\R m_0 dx=\int_\R m dx=\int_{\R}u dx=e^{-\lambda t}\int_\R u_0 dx.
\ee
\end{theorem}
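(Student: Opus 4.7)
The strategy is to integrate the evolution equation \eqref{1.0.2} termwise over $\R$ and extract a linear first-order ODE for the quantity ${\cal M}(t):=\int_\R m(t,x)\,dx$. Throughout, all boundary contributions at $\pm\infty$ vanish because of the assumed integrability and decay of $u$, $u_x$, $u_{xx}$.

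First I would handle the transport and stretching contributions. Writing $um_x+2u_xm=\p_x(um)+u_xm$, integration of the exact derivative produces a boundary term that vanishes, since $m=u-u_{xx}\to 0$ and $u\to 0$ at $\pm\infty$. For the remaining piece I substitute $m=u-u_{xx}$ and observe
\[
\int_\R u_x m\,dx=\int_\R uu_x\,dx-\int_\R u_x u_{xx}\,dx=\tfrac{1}{2}\bigl[u^2-u_x^2\bigr]_{-\infty}^{+\infty}=0.
\]
Hence $\int_\R(um_x+2u_x m)\,dx=0$.

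Next I treat the right-hand side of \eqref{1.0.2}. Each of the four terms is a perfect $x$-derivative:
\[
\al u_x=\p_x(\al u),\qquad \be u^2u_x=\p_x\!\left(\tfrac{\be}{3}u^3\right),\qquad \gamma u^3 u_x=\p_x\!\left(\tfrac{\gamma}{4}u^4\right),\qquad \Gamma u_{xxx}=\p_x(\Gamma u_{xx}),
\]
and each of $u$, $u^3$, $u^4$, $u_{xx}$ vanishes at $\pm\infty$, so their integrals over $\R$ are zero. Combining with the previous step, and using $u_{tx}=u_{xt}$ to interchange $\p_t$ with the spatial integral, what remains of \eqref{1.0.2} is the ODE
\[
{\cal M}'(t)+\lambda{\cal M}(t)=0,\qquad {\cal M}(0)=\int_\R m_0\,dx,
\]
whose unique solution is ${\cal M}(t)=e^{-\lambda t}\int_\R m_0\,dx$.

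The final identifications ${\cal M}(t)=\int_\R u\,dx$ and $\int_\R m_0\,dx=\int_\R u_0\,dx$ follow at once from $\int_\R u_{xx}\,dx=[u_x]_{-\infty}^{+\infty}=0$ (and analogously at $t=0$). Chaining the equalities yields \eqref{3.0.6}. There is no substantive analytic obstacle: the only delicate points are justifying differentiation under the integral sign and the vanishing of boundary terms, both of which are immediate from the stated hypotheses that $u,u_x,u_{xx}$ are integrable and tend to zero at $\pm\infty$.
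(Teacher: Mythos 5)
Your proof is correct and follows essentially the same route as the paper: both reduce the claim to the linear ODE $\frac{d}{dt}\int_\R m\,dx+\lambda\int_\R m\,dx=0$ by integrating the equation over $\R$ and killing all remaining terms as exact $x$-derivatives with vanishing boundary contributions. The only cosmetic difference is that the paper extracts the divergence structure from the prepackaged identity \eqref{3.0.1} of Proposition \ref{prop1.1} (and obtains $\int_\R u\,dx=e^{-\lambda t}\int_\R u_0\,dx$ by a second, independent integration of that identity), whereas you verify the same cancellations directly via $um_x+2u_xm=\p_x(um)+u_xm$ and $\int_\R u_xm\,dx=0$, then deduce the $u$-identity from $\int_\R u_{xx}\,dx=0$.
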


Theorems \ref{teo1.2} and \ref{teo1.3} are proved in Section \ref{sec3}.

A natural question is whether \eqref{1.0.1} admits wave breaking, that can be assured in the following scenario, which will be proved in Section \ref{sec4}:
\begin{theorem}\label{teo1.4}
Let $u_0\in H^3(\R)$, $u=u(t,x)$ be the corresponding solution to \eqref{1.0.2}, $y(t):=\inf\limits_{x\in\R}u_x(t,x)$, and $\kappa:=\max\{|\alpha|,|\beta|/3,|\gamma|/4,|\Gamma|\}$. Assume that there exists $\theta\in(0,\theta_0]$ and $x_0\in\R$ such that $\theta u_0'(x_0)<\min\{-\|u_0\|^{1/2}_{H^1(\R)},-\|u_0\|^2_{H^1(\R)}\}$, where 
$\theta_0:=\sqrt{2/(1+12\kappa)}.$

If $$\lambda\in\left(0,-\f{y(0)}{4}\f{\theta^2 u_0'(x_0)^2-\max\{\|u_0\|_{H^1(\R)},\|u_0\|_{H^1(\R)}^4\}}{\theta^2 u_0'(x_0)^2}\right),$$
then wave breaking for \eqref{1.0.2} occurs.
\end{theorem}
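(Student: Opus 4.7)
The plan is to derive a Riccati-type inequality for $y(t):=\inf_{x\in\R}u_x(t,x)$ that forces $y(t)\to-\infty$ at some $T<\infty$. I first recast \eqref{1.0.1} in nonlocal form using $p(x):=\tfrac12 e^{-|x|}$, the Green's function of $1-\partial_x^2$, and the identity $p*u_{xx}=p*u-u$. Applying $(1-\partial_x^2)^{-1}$ to \eqref{1.0.1} yields
\[u_t+uu_x+\lambda u+\Gamma u_x=-p_x*\mathcal{G},\qquad \mathcal{G}:=u^2+\tfrac{u_x^2}{2}-(\alpha+\Gamma)u-\tfrac{\beta}{3}u^3-\tfrac{\gamma}{4}u^4,\]
and differentiating once in $x$ (noting $p_{xx}=p-\delta$) gives
\[u_{tx}+uu_{xx}+u_x^2+\lambda u_x+\Gamma u_{xx}=\mathcal{G}-p*\mathcal{G}.\]

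Next I apply the Constantin--Escher lemma (valid since Theorem~\ref{teo1.1} gives $u\in C([0,T);H^3)\cap C^1([0,T);H^2)$): $y$ is locally Lipschitz and for a.e.\ $t$ there exists $\xi(t)$ with $u_x(t,\xi(t))=y(t)$, $u_{xx}(t,\xi(t))=0$, and $y'(t)=u_{tx}(t,\xi(t))$. Setting $U:=u(t,\xi(t))$, evaluating at $\xi(t)$, and invoking the pointwise lower bound $p*(u^2+u_x^2/2)(\xi)\ge U^2/2$ (proved by splitting the convolution at $\xi$ and integrating by parts), I obtain
\[y'(t)\le-\tfrac{y^2}{2}-\lambda y+\tfrac{U^2}{2}+(\alpha+\Gamma)[p*u-u](\xi)+\tfrac{\beta}{3}[p*u^3-u^3](\xi)+\tfrac{\gamma}{4}[p*u^4-u^4](\xi).\]
The identity $p*u-u=p_x*u_x$ together with $\|p_x\|_{L^2}=1/2$ and Theorem~\ref{teo1.2} (which gives $\|u(t)\|_{H^1}\le N:=\|u_0\|_{H^1}$ for $\lambda>0$) yields $|p*u-u|(\xi)\le N/2$. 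The Sobolev bound $\|u\|_\infty\le N/\sqrt 2$ with $\|p*f\|_\infty\le\|f\|_\infty$ gives $|p*u^k-u^k|(\xi)\le 2\|u\|_\infty^k\le N^k$ for $k\in\{3,4\}$, and $U^2/2\le N^2/4$. Using $|\alpha+\Gamma|\le 2\kappa$, $|\beta|/3\le\kappa$, $|\gamma|/4\le\kappa$, and the majorization $N^k\le M:=\max(N,N^4)$ for $k\in\{1,2,3,4\}$, this collapses to
\[y'(t)\le-\tfrac{y^2}{2}-\lambda y+\tfrac{1+12\kappa}{4}\,M.\]

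To extract wave breaking, set $\delta:=M/(\theta^2 u_0'(x_0)^2)\in(0,1)$ by hypothesis. The constraint $\theta\le\theta_0=\sqrt{2/(1+12\kappa)}$ is equivalent to $(1+12\kappa)/4\le 1/(2\theta^2)$, and so $\tfrac{1+12\kappa}{4}M\le\tfrac{\delta}{2}u_0'(x_0)^2$. A continuity argument (any first return $t^*$ of $y$ to $y(0)$ would force $y'(t^*)\ge 0$, contradicting the estimate which yields $y'(t^*)\le -\eta y(0)^2<0$) shows $y(t)\le y(0)\le u_0'(x_0)$ throughout the lifespan, so $y(t)^2\ge u_0'(x_0)^2$. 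Combining this with the hypothesis $\lambda<\tfrac{1-\delta}{4}|y(0)|$ (the prescribed upper bound rewritten) and $|y(t)|\ge|y(0)|$ gives $\lambda|y(t)|\le\tfrac{1-\delta}{4}y(t)^2$, whence
\[y'(t)\le-\tfrac{1-\delta}{4}\,y(t)^2.\]
Riccati integration then forces $y(t)\to-\infty$ at some $T\le 4/((1-\delta)|y(0)|)<\infty$, establishing wave breaking.

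The main obstacle is the tight constant tracking in the remainder bound. The precise coefficient $(1+12\kappa)/4$ is attainable only by using the identity $p*u-u=p_x*u_x$ (rather than the cruder $|p*u-u|\le 2\|u\|_\infty$) in the $(\alpha+\Gamma)$-piece, together with the sharp Sobolev constant $1/\sqrt 2$ and the majorizations $|\beta|/3,|\gamma|/4\le\kappa$; any loss at this stage would change the value of $\theta_0$ and break the link with the hypotheses. Once the coefficient is exactly $(1+12\kappa)/4$, the prescribed ranges of $\theta$ and $\lambda$ are precisely what is needed to close the Riccati bootstrap and yield finite-time blow-up.
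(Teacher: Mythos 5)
Your proposal is correct and follows the same overall strategy as the paper: rewrite the equation in nonlocal form with the Green's function $p=\frac12 e^{-|x|}$, differentiate once in $x$, evaluate at the spatial minimizer of $u_x$ via the Constantin--Escher lemma, use the lower bound $p*(u^2+u_x^2/2)\geq u^2/2$ together with the decay of $\|u(t)\|_{H^1}$ from Theorem \ref{teo1.2}, and close a Riccati inequality using the smallness of $\lambda$. The two places where you genuinely deviate are both defensible and, in one case, an improvement. First, the paper controls the nonlocal remainder by $|G(u)|\leq 3\|h(u)\|_{H^1}$ and then simply asserts $\|h(u)\|_{H^1}\leq\kappa U_0$ for the specific $\kappa$ of the theorem statement; your pointwise route via $p*u-u=p_x*u_x$, $\|p_x\|_{L^2}=1/2$, $\|p*f\|_{L^\infty}\leq\|f\|_{L^\infty}$ and the sharp constant $\|u\|_{L^\infty}\leq\|u\|_{H^1}/\sqrt2$ reaches the same coefficient $(1+12\kappa)U_0/4$ with every step verifiable, which is cleaner. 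Second, in the endgame the paper keeps the term $\lambda y$ and integrates $\frac{d}{dt}\bigl(e^{-\lambda t}/y\bigr)$, extracting the contradiction from $\lambda<\lambda_0$; you instead absorb $\lambda|y(t)|\leq\frac{1-\delta}{4}y(t)^2$ directly into the Riccati coefficient, which is equivalent and arguably more transparent. Two minor points you should tighten: the bootstrap ``$y(t)\leq y(0)$ throughout the lifespan'' should be phrased for an absolutely continuous, a.e.\ differentiable $y$ (argue on the first crossing time by integrating the differential inequality rather than evaluating $y'(t^*)$ pointwise, exactly as in the paper's citation of Escher); and the final sentence should make explicit that the Riccati blow-up forces the maximal existence time $T$ to be finite and that, by Theorem \ref{teo4.1} (no blow-up when $u_x$ is bounded below) together with the uniform bound on $\|u\|_{L^\infty}$, the singularity is necessarily wave breaking.
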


Theorem \ref{teo1.4} can only foresee the emergence of wave breaking of the solutions of \eqref{1.0.2} for limited, but positive, values of $\lambda$.  We observe that if  $\lambda$ in \eqref{1.0.2} is small, then we can interpret the term $\lambda\,m$ as a perturbation of the equation
\bb\label{1.0.3}
\left\{
\ba{l}
m_t+um_x+2u_xm=\al u_x+\beta u^2u_x+\gamma u^3u_x+\Gamma u_{xxx},\quad t>0,\,\,\,x\in\R,\\
\\
u(0,x)=u_0(x),
\ea
\right.
\ee
which was studied in \cite{silvajde2019,silva2019}, and also in \cite{chines-adv,chines-arxiv,gui-jnl,chines-jde} for specific choices of the parameters $\al,\,\be,\,\gamma$ and $\Gamma$, as already mentioned.

Although we may unriddle the presence of the term $\lambda m$ in \eqref{1.0.1} as a perturbation of \eqref{1.0.3}, we cannot underrate it, since its presence brings structural and substantial changes in the behaviour of the solutions of equation \eqref{1.0.1}. As we shall show in section Section \ref{sec3}, it implies that if global solutions of \eqref{1.0.2} exist, as well as their corresponding energy functional, then they vanish as $t\rightarrow\infty$. Moreover, for $\lambda$ sufficiently small, the conditions for wave breaking of the solutions \eqref{1.0.2} are unaltered when compared to \eqref{1.0.3}, as one can observe comparing our results with those proved in \cite{silva2019} regarding this matter.

The proof of local well-posedness naturally leads to the question of whether the problem \eqref{1.0.2} has global solutions. While we have an accurate description of the manifestation of blow-up phenomena as wave breaking, the question of global solvability of equation \eqref{1.0.1} is unclear. In Section \ref{sec5} we bring some light to this problem and we show the limitations and open problems that lead us to fail in giving a complete description for the global existence of solutions of \eqref{1.0.1}.

We discuss our results in Section \ref{sec6}, while in Section \ref{sec7} we present our conclusions.

\subsection{ Novelty and challenges of this paper}

The presence of the cubic and quartic nonlinearities in \eqref{1.0.2} brings some difficulties in the qualitative analysis of the solutions of \eqref{1.0.2} when compared with similar works dealing with \eqref{1.0.3}. This is somewhat expected, since the presence of these higher order nonlinearities introduces substantial modifications on the behaviour of the solutions of \eqref{1.0.2} in comparison with \eqref{1.0.3}, as one can infer by comparing\footnote{It is worth mentioning that in all of these works there is no linear term in the equations involved.} the results in \cite{const1998-1,const1998-2,const1998-3,const2000-1,raspa,escher,lenjde2013,liu2011,blanco,tiancmp2005,yindcdc2004} with those in \cite{silvajde2019,silva2019,chines-adv,chines-arxiv,gui-jnl, chines-jde}. In particular, the equation \eqref{1.0.3} does not seem to be integrable unless $\be=\gamma=0$, see \cite{silvajde2019,silva2019}. Turning back to our case, if $\lambda\beta\neq0$ or $\lambda\gamma\neq0$ we have a dramatically different situation to be considered regarding both global existence and wave breaking phenomena. Besides, if one of these conditions is satisfied, then we cannot reduce the analysis of equation \eqref{1.0.2}, with $\lambda\neq0$, to equation \eqref{1.0.3} as it is possible for some dissipative CH type equations, as pointed out in \cite{lenjde2013}.

In case $\al=-\Gamma$, $\lambda>0$ and $\be=\gamma=0$, the equation \eqref{1.0.3} is reduced to
$$m_t+um_x+2u_xm+\lambda m=\al m_x,$$
which, up to notation, was investigated in \cite{novjmp2013,zhangjmp2015}. The change $(u(t,x)\mapsto u(t,x+\al t)$ transform the last equation into the weakly dissipative CH equation, which was investigated in \cite{wujmp2006,wujde2009}.

It was pointed out by Lenells and Wunsch \cite{lenjde2013} that the analysis of the latter equation can be reduced to the CH equation through the change of variables
$$
u(t,x)\mapsto e^{-\lambda t}u\left(\f{1-e^{-\lambda t}}{\lambda},x\right).
$$

In our case, if $\al\neq-\Gamma$ or $(\be,\gamma)\neq(0,0)$, then we cannot use the results in \cite{lenjde2013} to reduce the analysis of equation \eqref{1.0.1} to other previously known. %Let us explain the reason for the scalar case (in \cite{lenjde2013} some systems are also considered and our observations can directly be extended to them).

The clever observation made by Lenells and Wunsch \cite{lenjde2013} can be applied to a large class of relevant equations in the field of shallow water models and integrable systems, in particular, to some of the references cited above (and perhaps in others in the list) regarding (weakly) dissipative equations. As one can infer from the results in \cite{lenjde2013}, the transformation connecting equations are of the type
\bb\label{6.0.1}
u(t,x)\mapsto e^{-\lambda t}u\left(\f{1-e^{-(p-1)\lambda t}}{\lambda},x\right),
\ee
where $p$ is a certain parameter. The reason for this transformation works is the following (we pay attention only to the scalar cases, but the explanation for systems follows the same argument): The equations considered in \cite{lenjde2013} are of the form $m_t+\lambda m=F[u,m]$, where $F$ is a homogeneous polynomial in $u,\,m$ and their derivatives with respect to $x$. This means that if $\mu$ is a parameter, then $F[\mu u,\mu m]=\mu^p F[u,m]$, for some $p$. It is well known that the CH, DP, Novikov and other similar equations are scale-invariant, see \cite{anco,boz,pri-book,pri-aims} and references therein. In particular, $p=2$ for the CH equation and its corresponding dissipative equation. The homogeneity of $F[u,m]$ allows us to use the substitution \eqref{6.0.1} to reduce the equations mentioned in \cite{lenjde2013} to their counterparts with $\lambda=0$.

In our case, if $\al\neq-\Gamma$ we cannot eliminate the term $\al u_x+\Gamma u_{xxx}$. On the other hand, if $(\be,\gamma)\neq(0,0)$ then we have the presence of cubic and quartic terms. In any situation, we lose homogeneity and, therefore, the elegant transformation introduced in \cite{lenjde2013} is no longer admissible. Moreover, these higher order terms not only prevent us to invoke the results aforementioned, but also bring difficulties to completely describe conditions for the global existence of solutions for the Cauchy problem \eqref{1.0.2}.

Another peculiarity of the present paper is the fact that we construct some invariants for the solutions of the equation with enough decaying at infinity through the establishment of differential identities, as shown in Proposition \ref{prop1.1}. These invariants will provide us estimates on the solutions of the equation that will help us to describe the wave breaking scenario.

\section{Local well-posedness}\label{sec2}

Here we establish the existence and uniqueness of solutions of the Cauchy problem \eqref{1.0.2}. More precisely, we respond questions 1, 2 and 3 in the Introduction of the paper. These results are established at the local level, meaning that we guarantee the existence of $T>0$ such that the solution exists on $[0,T)\times\R$. 

\subsection{Overview of functional analysis}\label{subsec2.1}

We present a short overview of Sobolev spaces, embeddings and mappings between Sobolev spaces. For further readings about these subjects, the reader is referred to \cite{brezis,folland,hunter,linares,taylor}.

If $\phi$ is a tempered distribution, its Fourier transform ${\cal F}(\phi)$ and its corresponding inverse are, respectively, given by
$$
\hat{\phi}(\xi):={\cal F}(\phi)(\xi)=\f{1}{\sqrt{2\pi}}\int_{-\infty}^{+\infty}\phi(x)e^{-ix\xi}dx\quad\text{
and}\quad 
\phi(x)={\cal F}^{-1}(\hat{\phi})(x):=\f{1}{\sqrt{2\pi}}\int_{-\infty}^{+\infty}\hat{\phi}(\xi)e^{ix\xi}d\xi.
$$

Given $s\in\R$, the Sobolev space of order $s$ is given by
$H^{s}(\R)=\Lambda^{-s}(L^2(\R)),$
where $L^2(\R)(=H^0(\R))$ denotes the usual Hilbert space of the squared integrable functions and
$\Lambda^{s}u:={\cal F}((1+|\xi|^2)^{s/2}\hat{u}).$

The operator $\Lambda^{s}$ is a unitary isomorphism between $H^{t}(\R)$ and $H^{t-s}(\R)$, for any $t\in\R$. Of particular importance is the case $s=2$, in which the operator can be identified with the differential operator $\Lambda^2:=1-\p_x^2$ (also known as Helmholtz operator) and its inverse is given by
$$\Lambda^{-2}u=g\ast u=\f{1}{2}\int_{\R}g(x-y)u(y)dy,$$
where $\ast$ denotes the convolution and $g(y)=e^{-|y|}/2$ is the Green function of the equation $(1-\p_x^2)u=\delta(x)$, and $\delta(x)$ is the Dirac delta distribution.

We note that $H^s(\R)$, for each $s\in\R$, is a Hilbert space when endowed with the inner product
$$\langle u,v\rangle _{H^{s}}:=\int_{\R}(1+|\xi|^2)^{s}\hat{u}(\xi)\overline{\hat{v}(\xi)}d\xi.$$

We also note that if $u\in H^1(\R)$, then its norm is given by
$\|u\|_{H^1(\R)}^2=\|u\|_{L^2(\R)}^2+\|u_x\|_{L^2(\R)}^2.$

For each $s\in\R$, $\p_x\in {\cal L}(H^{s}(\R), H^{s-1}(\R))$, where $u\mapsto u_x$, and if $s$ and $t$ are real numbers such that $s\geq t$, then ${\cal S}(\R)\subseteq H^{s}(\R)\subseteq H^t(\R)\subseteq{\cal S}'(\R)$. We, indeed, shall use the estimates $\Vert\partial_x f \Vert_{H^{s-1}(\R)}\leq \Vert f\Vert_{H^{s}(\R)}$, 
$
\Vert \Lambda^{-2}f \Vert_{H^s(\R)}\leq \Vert f\Vert_{H^{s-2}(\R)}
$
and 
$\Vert \partial_x\Lambda^{-2}f \Vert_{H^s(\R)}\leq \Vert f\Vert_{H^{s-1}(\R)}
$.

Some useful results to our purposes are:
\begin{lemma}\label{lema2.1}\textsc{[Algebra property]}
For $s>1/2$, there is a constant $c_s>0$ such that $\Vert fg \Vert_{H^s(\R)}\leq c_s\Vert f\Vert_{H^s(\R)}\Vert g\Vert_{H^s(\R)}$.
\end{lemma}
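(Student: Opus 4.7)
The plan is to prove the algebra property by working on the Fourier side, where the norm has a clean representation and the product becomes a convolution. By definition,
\[
\|fg\|_{H^s(\R)}^2 = \int_{\R} (1+|\xi|^2)^s |\widehat{fg}(\xi)|^2\, d\xi,
\]
and since $\widehat{fg} = (2\pi)^{-1/2}\,\hat f * \hat g$, the proof reduces to controlling a weighted $L^2$-norm of a convolution.

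The key analytic tool I would invoke is the Peetre-type inequality
\[
(1+|\xi|^2)^{s/2} \leq 2^{s/2}\bigl((1+|\eta|^2)^{s/2} + (1+|\xi-\eta|^2)^{s/2}\bigr),\quad s\geq 0,
\]
which follows from $|\xi|^2 \leq 2|\eta|^2 + 2|\xi-\eta|^2$. Applying this inside the convolution integral yields
\[
(1+|\xi|^2)^{s/2}|\widehat{fg}(\xi)| \leq C_s\bigl( (F_s * |\hat g|)(\xi) + (|\hat f| * G_s)(\xi) \bigr),
\]
where $F_s(\eta):=(1+|\eta|^2)^{s/2}|\hat f(\eta)|$ and $G_s(\eta):=(1+|\eta|^2)^{s/2}|\hat g(\eta)|$. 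Note that $\|F_s\|_{L^2(\R)} = \|f\|_{H^s(\R)}$ and likewise for $G_s$.

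Next I would apply Young's convolution inequality in the form $\|h_1 * h_2\|_{L^2} \leq \|h_1\|_{L^2}\|h_2\|_{L^1}$ to each of the two terms. This reduces the proof to showing that $\hat f, \hat g \in L^1(\R)$ with a bound controlled by the $H^s$-norm. Here is exactly where the hypothesis $s>1/2$ enters: by Cauchy--Schwarz,
\[
\|\hat g\|_{L^1(\R)} \leq \Bigl(\int_{\R}(1+|\eta|^2)^{-s}\,d\eta\Bigr)^{1/2}\|g\|_{H^s(\R)},
\]
and the integral on the right is finite precisely when $s>1/2$. The symmetric bound holds for $\hat f$. Combining these estimates produces the inequality $\|fg\|_{H^s(\R)} \leq c_s\|f\|_{H^s(\R)}\|g\|_{H^s(\R)}$ with an explicit $c_s$ depending only on $s$.

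The computations are all routine; the only point requiring care is the threshold $s>1/2$, which is sharp and comes from the one-dimensional Sobolev embedding implicitly used when asserting $\hat g\in L^1(\R)$. I anticipate no real obstacle beyond cleanly separating the two symmetric contributions produced by the Peetre inequality and tracking the constants.
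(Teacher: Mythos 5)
Your argument is correct and is precisely the standard proof of the algebra property that the paper delegates to its references (Linares--Ponce and Taylor): pass to the Fourier side, split the weight $(1+|\xi|^2)^{s/2}$ over the convolution via the Peetre-type inequality, and close with Young's inequality plus the Cauchy--Schwarz estimate $\|\hat g\|_{L^1(\R)}\leq\bigl(\int_{\R}(1+|\eta|^2)^{-s}\,d\eta\bigr)^{1/2}\|g\|_{H^s(\R)}$, which is where $s>1/2$ is used. The only cosmetic point is that for $s>2$ the subadditivity step $(A+B)^{s/2}\leq A^{s/2}+B^{s/2}$ fails and the constant in your Peetre inequality should be $2^{s}$ rather than $2^{s/2}$; this affects only the value of $c_s$, not the argument.
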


\begin{proof}
See \cite{linares} or \cite{taylor}, pages 51 and exercise 6 on page 320, respectively.
\end{proof}

\begin{lemma}\textsc{[Sobolev Embedding Theorem]}\label{lema2.2}
If $s>1/2$ and $u\in H^{s}(\R)$, then $u$ is bounded and continuous. Moreover, in case we have $s>1/2+k$, then $H^s(\R)\subseteq C^k_0(\R)$.
\end{lemma}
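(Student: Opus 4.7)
The plan is to exploit the Fourier characterization of $H^s(\R)$ already recalled in Subsection \ref{subsec2.1} together with the Fourier inversion formula. The key observation is that the weight $(1+|\xi|^2)^{-s/2}$ is square integrable on $\R$ precisely when $s>1/2$, which is exactly what lets us convert $L^2$-information on $\hat u$ (i.e.\ $H^s$-regularity of $u$) into $L^1$-information on $\hat u$ (i.e.\ uniform continuity of $u$) by a single application of Cauchy-Schwarz.

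First I would write, for $u\in H^s(\R)$,
\[
|\hat u(\xi)|=(1+|\xi|^2)^{-s/2}\cdot(1+|\xi|^2)^{s/2}|\hat u(\xi)|,
\]
and by Cauchy-Schwarz obtain
\[
\|\hat u\|_{L^1(\R)}\leq\left(\int_\R \frac{d\xi}{(1+|\xi|^2)^{s}}\right)^{1/2}\|u\|_{H^s(\R)}=:c_s\|u\|_{H^s(\R)},
\]
where the integral converges because $2s>1$. Then I would invoke the inversion formula
\[
u(x)=\frac{1}{\sqrt{2\pi}}\int_\R \hat u(\xi)e^{ix\xi}\,d\xi,
\]
which, combined with $\hat u\in L^1(\R)$, immediately yields $\|u\|_{L^\infty(\R)}\leq c_s\|u\|_{H^s(\R)}/\sqrt{2\pi}$ (boundedness) and continuity via dominated convergence applied to a sequence $x_n\to x$ (since the integrand is pointwise convergent and dominated by $|\hat u|\in L^1$).

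For the $C^k_0$ assertion with $s>1/2+k$, I would iterate the first part. For each $0\leq j\leq k$ the distributional derivative $\partial_x^j u$ has Fourier transform $(i\xi)^j\hat u(\xi)$, and
\[
\int_\R (1+|\xi|^2)^{s-j}|\hat u(\xi)|^2\,d\xi\leq\|u\|_{H^s(\R)}^2,
\]
so $\partial_x^j u\in H^{s-j}(\R)$ with $s-j>1/2$. Applying the first part to $\partial_x^j u$ shows that each derivative is bounded and continuous, hence $u\in C^k(\R)$ with the expected estimate $\|\partial_x^j u\|_{L^\infty(\R)}\leq c_{s-j}\|u\|_{H^s(\R)}$. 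The decay at infinity required to land in $C^k_0(\R)$ follows from the Riemann-Lebesgue lemma applied to $\widehat{\partial_x^j u}\in L^1(\R)$.

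There is no serious obstacle here; the whole argument is one Cauchy-Schwarz plus Fourier inversion. The only delicate point worth spelling out carefully is the sharpness of the threshold $s>1/2$, which is dictated by the convergence of $\int_\R (1+|\xi|^2)^{-s}d\xi$, and the interpretation of $C^k_0$ (continuous derivatives up to order $k$ vanishing at infinity), for which one needs the Riemann-Lebesgue step rather than just boundedness.
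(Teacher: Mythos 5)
Your proof is correct and complete. The paper does not actually prove this lemma — it simply cites Linares--Ponce and Taylor — so there is no in-paper argument to compare against; but your Fourier-analytic argument (Cauchy--Schwarz against the weight $(1+|\xi|^2)^{-s/2}$, which is square integrable precisely for $s>1/2$, followed by Fourier inversion, dominated convergence for continuity, the transfer $\partial_x^j u\in H^{s-j}(\R)$ for the higher derivatives, and Riemann--Lebesgue for the vanishing at infinity needed for $C^k_0(\R)$) is exactly the standard proof that those references contain. You correctly identify the two points that are easy to gloss over: the sharpness of the threshold $s>1/2$ and the fact that boundedness alone does not give membership in $C^k_0(\R)$, which is why the Riemann--Lebesgue step is needed. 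Nothing is missing.
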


\begin{proof}
See \cite{linares,taylor}, pages 47 and 317, respectively.
\end{proof}

As a consequence of the Sobolev Embedding Theorem, if $s>1/2$, then $u\in L^\infty(\R)$. Moreover, if $u\in H^{s}(\R)$, with $s>1/2+k$, for a certain non-negative number $k$, then $u\in C^{k}_0$. In any case, $\|u\|_{L^\infty(\R)}\leq c_s\|u\|_{H^s(\R)}$, for some positive constant $c_s$ depending on $s$.

We recall that if $E$ and $F$ are Banach spaces, a mapping $f:E\rightarrow F$ is called Lipschitz if there exists $k>0$ such that $\|f(u)-f(v)\|_F\leq k\|u-v\|_E$, for all $u,\,v\in E$.

\begin{lemma}\label{lema2.3}
Let $F\in C^\infty(\R)$ such that $F(0)=0$. If $f\in H^s(\R)\cap L^\infty(\R)$, $s\geq 0$, then $\|F(f)\|_s\leq K\|f\|_s$, where $K$ depends only on $\|f\|_{L^\infty(\R)}$, and $\|f\|_s=\|f\|_{H^s(\R)}+\|f\|_{L^\infty(\R)}$.
\end{lemma}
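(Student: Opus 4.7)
The strategy is to split the required bound $\|F(f)\|_s = \|F(f)\|_{H^s(\R)} + \|F(f)\|_{L^\infty(\R)} \leq K \|f\|_s$ into an $L^\infty$ part and an $H^s$ part, and then to handle the $H^s$ part first for integer $s$ and afterwards for fractional $s$ by interpolation. The $L^\infty$ piece is immediate: since $F \in C^\infty(\R)$ and $F(0)=0$, the mean value theorem yields
\[
|F(f(x))| \leq M(\|f\|_{L^\infty}) \, |f(x)|, \qquad M(r) := \sup_{|\tau|\leq r} |F'(\tau)|,
\]
so both $\|F(f)\|_{L^\infty} \leq M \|f\|_{L^\infty}$ and (taking $L^2$ norms) $\|F(f)\|_{L^2} \leq M \|f\|_{L^2}$. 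This already settles the case $s=0$.

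For integer $s = k \geq 1$, the plan is to differentiate $F(f)$ via Fa\`a di Bruno's formula and write
\[
\partial_x^k F(f) = \sum_{\pi} c_\pi \, F^{(|\pi|)}(f) \prod_{B \in \pi} \partial_x^{|B|} f,
\]
where the sum runs over partitions $\pi$ of $\{1,\dots,k\}$. Because $f \in L^\infty$ and $F \in C^\infty$, each factor $F^{(|\pi|)}(f)$ is bounded in $L^\infty$ by a constant depending only on $\|f\|_{L^\infty}$. For the remaining product I would apply the Gagliardo--Nirenberg inequality
\[
\|\partial_x^{j} f\|_{L^{2k/j}(\R)} \leq C \, \|f\|_{L^\infty(\R)}^{1-j/k} \, \|\partial_x^{k} f\|_{L^2(\R)}^{j/k}, \qquad 1 \leq j \leq k,
\]
and estimate the product by H\"older's inequality; the exponents match because $\sum_{B \in \pi} |B| = k$ gives $\sum_{B \in \pi} |B|/(2k) = 1/2$, and the sum of the Gagliardo--Nirenberg exponents $|B|/k$ equals $1$. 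Adding up finitely many such partition terms and combining with the $s=0$ estimate gives $\|F(f)\|_{H^k} \leq K(\|f\|_{L^\infty}) \|f\|_{H^k}$.

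For non-integer $s \geq 0$, I would interpolate between the integer estimates just established. Writing $s = (1-\theta)k + \theta (k+1)$ for an integer $k = \lfloor s \rfloor$ and $\theta \in [0,1)$, and using that $[H^k(\R), H^{k+1}(\R)]_\theta = H^s(\R)$ (complex or real interpolation), the Moser bounds at levels $k$ and $k+1$ transfer to level $s$, yielding $\|F(f)\|_{H^s} \leq K(\|f\|_{L^\infty}) \|f\|_{H^s}$ with $K$ still depending only on $\|f\|_{L^\infty}$. Combined with the $L^\infty$ bound from the first step, this completes the inequality $\|F(f)\|_s \leq K \|f\|_s$. The main technical obstacle is the fractional range: Fa\`a di Bruno is not directly available, so one must either pass through interpolation as above or, alternatively, work with a Littlewood--Paley decomposition and Bony's paraproduct to reprove the Moser estimate intrinsically. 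For the application in this paper, where only $s > 3/2$ is used, the integer case together with interpolation between $k = \lceil s\rceil - 1$ and $k+1$ is sufficient; the delicate book-keeping of the constant $K$ (ensuring it depends \emph{only} on $\|f\|_{L^\infty}$ and not on higher norms of $f$) is what makes the interpolation step non-trivial.
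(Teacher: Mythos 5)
The paper does not actually prove this lemma: it is quoted verbatim from Constantin and Molinet \cite{const-mol} (the Proposition on p.~1065), so your proposal is being measured against the standard proof of the Moser composition estimate rather than against an argument in the text. Your treatment of the $L^\infty$ piece, the case $s=0$, and the integer case $s=k\geq 1$ via Fa\`a di Bruno together with the Gagliardo--Nirenberg inequalities $\|\partial_x^j f\|_{L^{2k/j}}\leq C\|f\|_{L^\infty}^{1-j/k}\|\partial_x^k f\|_{L^2}^{j/k}$ is exactly the classical Moser argument and is correct; the exponent bookkeeping you indicate ($\sum_B |B|/(2k)=1/2$ for H\"older, $\sum_B |B|/k=1$ for the powers of $\|\partial_x^k f\|_{L^2}$) closes properly, and the constant indeed depends only on $\|f\|_{L^\infty}$.

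The genuine gap is the fractional step. The identity $[H^k(\R),H^{k+1}(\R)]_\theta=H^s(\R)$ interpolates \emph{spaces}, but the interpolation theorems that transfer bounds along such a scale apply to linear operators; the map $f\mapsto F(f)$ is nonlinear, so knowing $\|F(f)\|_{H^k}\leq K\|f\|_{H^k}$ and $\|F(f)\|_{H^{k+1}}\leq K\|f\|_{H^{k+1}}$ does not, by itself, yield anything at the intermediate level. To repair this you must either (i) invoke a nonlinear interpolation theorem (Peetre, Tartar), whose hypotheses are met here because one has the Lipschitz estimate $\|F(f)-F(g)\|_{L^2}\leq C(R)\|f-g\|_{L^2}$ uniformly for $\|f\|_{L^\infty},\|g\|_{L^\infty}\leq R$ at the low end of the couple, or (ii) abandon interpolation and run the Littlewood--Paley/paraproduct argument (Meyer's theorem, $F(f)=T_{F'(f)}f+\text{smoother remainder}$) that you mention only as an alternative. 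Since the paper needs the lemma for arbitrary real $s>3/2$, the fractional case cannot be avoided, so as written the proposal is incomplete at precisely the step you flag as ``non-trivial bookkeeping''---the obstruction is structural (nonlinearity of the composition operator), not a matter of tracking constants.
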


\begin{proof}
See Proposition on page 1065 of \cite{const-mol}.
\end{proof}

Note that for $s>1/2$ and in view of the Sobolev Embedding Theorem, $H^s(\R)\cap L^\infty(\R)=H^s(\R)$.

We conclude this subsection with a very useful differential inequality.

\begin{lemma}\textsc{[Gronwall inequality]}\label{lema2.4}
Let $y\in C^1(I,\R)$, where $I$ is an interval on $\R$ containing $0$. Assume that $y'(t)\leq\beta(t)y(t)$, where $\beta$ is a smooth function. Then $y(t)\leq y(0)e^{\int_{0}^t\beta(s)ds}$.
\end{lemma}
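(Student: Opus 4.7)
The plan is to use the standard integrating factor trick. I would define $\mu(t) := \exp\!\left(-\int_0^t \beta(s)\,ds\right)$, which is smooth and strictly positive on $I$ since $\beta$ is continuous, and which satisfies $\mu'(t) = -\beta(t)\mu(t)$ with $\mu(0)=1$. Setting $z(t) := y(t)\mu(t)$, a direct differentiation gives
\[
z'(t) \;=\; y'(t)\mu(t) + y(t)\mu'(t) \;=\; \bigl(y'(t) - \beta(t) y(t)\bigr)\mu(t).
\]
The hypothesis $y'(t) \leq \beta(t) y(t)$ combined with $\mu(t) > 0$ then forces $z'(t) \leq 0$ on the portion of $I$ with $t \geq 0$.

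Consequently $z$ is non-increasing on $I \cap [0,\infty)$, so $z(t) \leq z(0) = y(0)$ for all such $t$, and multiplying by the positive quantity $\mu(t)^{-1} = \exp\!\left(\int_0^t \beta(s)\,ds\right)$ yields the stated bound. There is essentially no obstacle here: the whole argument is a one-line application of the integrating factor. The only points worth flagging are that the base point $0$ fixes the orientation and makes $\mu(0)=1$, and that the continuity of $\beta$ is precisely what ensures $\mu$ is a well-defined smooth function. If $I$ extends to the left of $0$, the same monotonicity argument applied on $I\cap(-\infty,0]$ (where $z$ is now non-decreasing) gives the reverse-direction estimate, but in the applications that follow the inequality is invoked only for $t\geq 0$.
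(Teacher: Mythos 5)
Your proof is correct: the integrating factor argument is the standard proof of this differential form of Gronwall's inequality, and every step (positivity of $\mu$, monotonicity of $z$ on $I\cap[0,\infty)$, division by $\mu$) is justified. The paper itself does not prove the lemma but simply cites Hunter--Nachtergaele, where essentially this same argument appears, so your write-up is a complete, self-contained version of what the paper delegates to the reference.
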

\begin{proof}
See \cite{hunter}, page 56.
\end{proof}

We note that the Gronwall inequality also implies $y(t)\geq y(0)e^{\int_{0}^t\beta(s)ds}$ if $y'(t)\geq\beta(t)y(t)$.

\subsection{Proof of Theorem \ref{teo1.1}}\label{subsec2.2}

%\begin{definition}\label{def2.1}
%Let $u\in C^{0}[0,T),E)$, with $T>0$, be a solution of a differential equation, where $E$ is a suitable function space. The number $T$ is called maximal time of the existence of the solution, or lifespan, and it usually depends on the initial data of the equation, and the space $E$. We say that $u$ is a global solution if $T=\infty$. If $u$ is only defined for $T<\infty$, we then say that $u$ develop a blow-up in finite time.
%\end{definition}

Our main ingredient to prove the local-well posedness of equation \eqref{1.0.1} with initial data $u(0,x)=u_0(x)$ is the following result, due to Tosio Kato \cite{kato}.

\begin{lemma}\label{lema2.5}(Kato's theorem)
Let $A(u)$ be a linear operator and consider the problem 
\bb\label{2.3.1}
\left\{
\ba{l}
\ds{\f{d u}{dt}+A(u)u=f(u)\in X,\quad t\geq0},\\
\\
u(0)=u_0\in Y.
\ea
\right.
\ee
Assume that the following conditions are satisfied:
\begin{enumerate}
\item[{\bf C1}] Let $X$ and $Y$ be reflexive Banach spaces, such that $Y\subseteq X$ and the inclusion $Y\hookrightarrow X$ is continuous and dense. In addition, there exists an isomorphism $S:Y\rightarrow X$ such that $\|u\|_Y=\|Su\|_X$;

\item[{\bf C2}]
There exist a ball $W$ of radius $R$ such that $0\in W\subseteq Y$ and a family of operators $(A(u))_{u\in W}\subseteq {\cal L}(X)$ such that $-A(u)$ generates a $C_0$ semi-group in $X$ with $\|e^{-s A(u)}\|_{{\cal L}(X)}\leq e^{\be s}$, for any $u\in W$, $s\geq0$, for a certain real number $\be$.

\item[{\bf C3}]
Let $S$ be the isomorphism in Condition ${\bf C1}$. Then $B(u):=[S,A(u)]S^{-1}\in {\cal L}(X)$. Moreover, there exists constants $c_1$ and $c_2$ such that $\|B(u)\|_{{\cal L}(X)}\leq c_1$, $\|B(u)-B(v)\|_{{\cal L}(X)}\leq c_2\|u-v\|_{Y}$, for all $u,\, v\in W$ 

\item[{\bf C4}]
For any $w\in W$, $Y\subseteq \dom(A(w))$ and $\|A(u)-A(v)\|_{{\cal L}(Y;X)}\leq c_3\|u-v\|_{X}$, for any $u,\,v\in W$.

\item[{\bf C5}]
The function $f:X\rightarrow X$ satisfy the following conditions:
\begin{enumerate}
    \item $\left.f\right|_{W}:W\rightarrow Y$ is bounded, that is, there exists a constant $c_4$ such that $\|f(w)\|_Y\leq c_4$, for all $w\in W$;
    \item $\left.f\right|_{W}:W\rightarrow X$ is Lipschitz when taking the norm of $X$ into account, that is, there is another constant $c_5$ such that $\|f(u)-f(v)\|_{X}\leq c_5\|u-v\|_X$, for all $u,\,v\in W$.
\end{enumerate}
\end{enumerate}
If $u_0\in W$, then there is $T>0$ such that $(\ref{2.3.1})$ has a unique solution $u\in C^0([0,T),W)\cap C^1([0,T),X)$, with $u(0)=u_0$.
\end{lemma}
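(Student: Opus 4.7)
The plan is to prove Kato's theorem by a Picard-type iteration in which the nonlinear problem is solved as a fixed point of a map that, at each iteration, solves a linear non-autonomous evolution equation. The role of the isomorphism $S$ in condition \textbf{C1} is to transfer $Y$-regularity estimates into $X$-estimates accessible from the semigroup theory guaranteed by \textbf{C2}. First, for a fixed $u\in C^0([0,T],W)$ (with $T>0$ to be chosen), I would solve the linear non-autonomous Cauchy problem
\[
\f{dv}{dt}+A(u(t))v=f(u(t)),\qquad v(0)=u_0.
\]
Conditions \textbf{C2} and \textbf{C4} imply that $\{A(u(t))\}_{t\in[0,T]}$ is a stable family of generators in $X$ depending continuously on $t$ in the $X$-to-$X$ operator norm, and $Y$ is an admissible reference space since $Y\subseteq\dom(A(w))$. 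By Kato's linear theory of hyperbolic evolutions there exists a unique evolution family $U_u(t,\tau)\in{\cal L}(X)$ (which also restricts to ${\cal L}(Y)$), giving the mild solution $v(t)=U_u(t,0)u_0+\int_0^t U_u(t,\tau)f(u(\tau))\,d\tau$.

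To control $\|v(t)\|_Y$, apply $S$ to the equation and use \textbf{C3}, writing $[S,A(u)]=B(u)S$, to obtain
\[
\f{d}{dt}(Sv)+A(u(t))(Sv)=Sf(u(t))+B(u(t))\,Sv.
\]
Treating the right-hand side as a perturbation with $\|B(u)\|_{{\cal L}(X)}\leq c_1$ and $\|Sf(u)\|_X=\|f(u)\|_Y\leq c_4$, the semigroup bound in \textbf{C2} together with Gronwall's inequality (Lemma \ref{lema2.4}) yields a uniform bound $\|v(t)\|_Y\leq C(R,\be,c_1,c_4,T)$. Choosing $T$ small makes this bound at most $R$, so the map $\Phi:u\mapsto v$ carries the closed ball of radius $R$ in $C^0([0,T],W)$ back to itself.

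For $u_1,u_2$ in that ball with $v_i=\Phi(u_i)$, subtracting the two linear equations gives
\[
\f{d}{dt}(v_1-v_2)+A(u_1)(v_1-v_2)=[A(u_2)-A(u_1)]v_2+f(u_1)-f(u_2).
\]
By \textbf{C4} and the $Y$-bound on $v_2$ one has $\|[A(u_2)-A(u_1)]v_2\|_X\leq c_3 R\|u_1-u_2\|_X$, while \textbf{C5}(b) gives $\|f(u_1)-f(u_2)\|_X\leq c_5\|u_1-u_2\|_X$. Another Gronwall argument yields $\|v_1-v_2\|_{C^0([0,T],X)}\leq C'T\|u_1-u_2\|_{C^0([0,T],X)}$, so shrinking $T$ further makes $\Phi$ a strict contraction on the closed $R$-ball when endowed with the weaker $C^0([0,T],X)$-metric; this set is closed in that metric because the $Y$-norm is lower semicontinuous under $X$-convergence on $Y$-bounded sequences, by reflexivity of $Y$. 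The Banach fixed-point theorem then delivers a unique fixed point $u$.

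It remains to show $u\in C^0([0,T],W)\cap C^1([0,T],X)$ in the strong topologies. The uniform $Y$-bound plus $X$-continuity gives, via reflexivity of $Y$, weak-$Y$ continuity; genuine strong $Y$-continuity follows once $t\mapsto\|u(t)\|_Y=\|Su(t)\|_X$ is shown to be continuous, which is done through an energy identity for $\|Su\|_X^2$ using \textbf{C2} and \textbf{C3}, since weak continuity plus continuity of the norm implies strong continuity in a reflexive space. Once $u\in C^0([0,T],Y)$, differentiating the mild-solution identity in $X$ produces $u\in C^1([0,T],X)$ solving the equation, and uniqueness is inherited from the contraction estimate. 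The main obstacle I expect is precisely this regularity upgrade, where reflexivity of $Y$, the commutator structure of \textbf{C3}, and the semigroup bound of \textbf{C2} must be combined carefully to rule out a loss of continuity from $Y$ to weak-$Y$; all remaining steps reduce to standard linear semigroup theory inside a Picard scheme.
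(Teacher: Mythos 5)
The paper does not actually prove this lemma: it is quoted as an external result and attributed directly to Kato \cite{kato}, so there is no in-paper argument to compare yours against. Your outline is essentially the classical proof from that reference --- freeze the coefficient, solve the linear non-autonomous problem via the evolution family associated with a stable family of generators, use the commutator condition {\bf C3} to propagate $Y$-bounds, and close a contraction in the weaker $C^0([0,T],X)$ metric on a $Y$-bounded ball, with reflexivity supplying closedness of that ball and the final regularity upgrade. As a sketch it is sound, with three caveats worth flagging: (i) the heavy lifting, namely existence and $Y$-invariance of the evolution family $U_u(t,\tau)$, is Kato's linear hyperbolic theory and is being invoked as a black box; (ii) condition {\bf C4} gives continuity of $t\mapsto A(u(t))$ only in ${\cal L}(Y,X)$, not in ${\cal L}(X)$ as you wrote, though that is in fact what the linear theory requires; and (iii) the commuted equation should read $\frac{d}{dt}(Sv)+A(u)(Sv)=Sf(u)-B(u)Sv$ (sign), harmless since only $\|B(u)\|_{{\cal L}(X)}\leq c_1$ enters the estimate. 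Finally, the step ``weak-$Y$ continuity plus continuity of $t\mapsto\|u(t)\|_Y$ implies strong $Y$-continuity'' requires the Radon--Riesz property, which holds in the Hilbert setting $Y=H^s(\R)$ used later in the paper but not in an arbitrary reflexive Banach space; Kato's own treatment of this regularity point is more delicate.
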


The constants mentioned in the conditions in the lemma above depend on the radius $R$ of $W$, see \cite{escher,kato,mustafa,blanco}.

At first sight Lemma \ref{lema2.5} does not seem to be applicable to \eqref{1.0.1}, since in \eqref{2.3.1} we have an evolution equation. This difficulty can be easily overcome by making use of the isometric isomorphism $\Lambda^{2}:H^{s}(\R)\rightarrow H^{s-2}(\R)$ and its inverse. More precisely, we have the following:

\begin{proposition}\label{prop2.1}
Equation \eqref{1.0.1} can be rewritten as
\bb\label{2.3.2}
u_t+(u+\Gamma)u_x=\Lambda^{-2}\p_x h(u)-\Lambda^{-2}\p_x\left(u^2+\f{u_x^2}{2}\right)-\lambda u,
\ee
where 
\bb\label{2.3.3}
h(u):=(\al+\Gamma)u+\f{\beta}{3}u^3+\f{\gamma}{4}u^4.
\ee
\end{proposition}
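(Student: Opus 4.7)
The plan is to verify the proposition by direct algebraic manipulation: I will show that $(1.0.1)$ is equivalent to the identity
$$\Lambda^2\bigl[u_t+(u+\Gamma)u_x+\lambda u\bigr]=\partial_x h(u)-\partial_x\!\left(u^2+\frac{u_x^2}{2}\right),$$
and then apply the isomorphism $\Lambda^{-2}:H^{s-2}(\R)\to H^s(\R)$ to both sides and isolate $u_t$. The central observation is that both $u_t-u_{txx}=\Lambda^2 u_t$ and $\lambda(u-u_{xx})=\lambda\Lambda^2 u$ already appear on the left of $(1.0.1)$, so the task reduces to absorbing the remaining nonlinear and dispersive terms into $\Lambda^2[(u+\Gamma)u_x]$ plus a divergence term.

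First I would compute $\Lambda^2[(u+\Gamma)u_x]$ via the product rule. Since
$$\partial_x\bigl[(u+\Gamma)u_x\bigr]=u_x^2+(u+\Gamma)u_{xx},\qquad \partial_x^2\bigl[(u+\Gamma)u_x\bigr]=3u_x u_{xx}+(u+\Gamma)u_{xxx},$$
it follows that
$$\Lambda^2\bigl[(u+\Gamma)u_x\bigr]=uu_x+\Gamma u_x-3u_x u_{xx}-uu_{xxx}-\Gamma u_{xxx}.$$
In particular, the terms $-2u_x u_{xx}-uu_{xxx}-\Gamma u_{xxx}$ appearing on the right-hand side of $(1.0.1)$, together with the $3uu_x$ on the left, are close to being generated by $\Lambda^2[(u+\Gamma)u_x]$, up to a $\partial_x$-correction of the form $\partial_x(u^2+u_x^2/2)$.

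Next I would check that this correction, together with $\partial_x h(u)$, accounts for exactly the remaining pieces of $(1.0.1)$. Since $\partial_x h(u)=(\alpha+\Gamma)u_x+\beta u^2u_x+\gamma u^3u_x$ and $\partial_x(u^2+u_x^2/2)=2uu_x+u_x u_{xx}$, a term-by-term comparison shows that
$$\partial_x h(u)-\partial_x\!\left(u^2+\frac{u_x^2}{2}\right)-\Lambda^2\bigl[(u+\Gamma)u_x\bigr]-\lambda\Lambda^2 u$$
equals $-\Lambda^2 u_t+3uu_x+\lambda(u-u_{xx})-2u_x u_{xx}-uu_{xxx}-\alpha u_x-\beta u^2u_x-\gamma u^3u_x-\Gamma u_{xxx}$, which is precisely the content of $(1.0.1)$ rearranged to one side, with the $\Gamma u_x$ generated by the expansion of $\Lambda^2[(u+\Gamma)u_x]$ cancelling against the corresponding contribution of $\partial_x h(u)$; this is exactly why the coefficient of $u$ in $h(u)$ is $\alpha+\Gamma$ rather than $\alpha$. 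Once the displayed identity is established, applying $\Lambda^{-2}$ and using linearity gives $(2.3.2)$.

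There is no serious analytic obstacle here, as the proposition is purely formal/algebraic; the only care required is careful bookkeeping of signs and coefficients, and recognising that the particular definition of $h(u)$ in $(2.3.3)$ is designed precisely to absorb the stray $\Gamma u_x$ produced when expanding $\Lambda^2[(u+\Gamma)u_x]$.
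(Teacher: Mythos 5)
Your proposal is correct in substance and takes the same route as the paper, whose entire proof is the observation that applying $\Lambda^2=1-\p_x^2$ to \eqref{2.3.2} returns \eqref{1.0.1}; your expansion of $\Lambda^2\bigl[(u+\Gamma)u_x\bigr]$ and your explanation of why $h$ carries the coefficient $\al+\Gamma$ (to cancel the stray $\Gamma u_x$) are exactly the bookkeeping the paper leaves implicit. There is, however, a sign slip in your final display: a direct computation gives
$$\p_x h(u)-\p_x\left(u^2+\f{u_x^2}{2}\right)-\Lambda^2\bigl[(u+\Gamma)u_x\bigr]-\lambda\Lambda^2 u=\al u_x+\be u^2u_x+\gamma u^3u_x+2u_xu_{xx}+uu_{xxx}+\Gamma u_{xxx}-3uu_x-\lambda(u-u_{xx}),$$
which by \eqref{1.0.1} equals $+\Lambda^2 u_t$ --- precisely the identity you announced at the outset --- whereas the expression you wrote, beginning with $-\Lambda^2 u_t$, evaluates (again using \eqref{1.0.1}) to $-2\Lambda^2 u_t$ and is not ``\eqref{1.0.1} rearranged to one side''. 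Since the announced identity $\Lambda^2\bigl[u_t+(u+\Gamma)u_x+\lambda u\bigr]=\p_x h(u)-\p_x\bigl(u^2+u_x^2/2\bigr)$ is the correct one and follows from the pieces you actually computed, the argument stands once that display is repaired; applying $\Lambda^{-2}$ then yields \eqref{2.3.2} as you state.
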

\begin{proof}
Applying the operator $\Lambda^2$ into \eqref{2.3.2} we obtain the equation in \eqref{1.0.2}, which is nothing but \eqref{1.0.1}.
\end{proof}
\begin{remark}\label{rem2.1}
Equation \eqref{2.3.2} explains why very often in the literature of CH type equations they are referred as {\it non-local evolution equations}. Firstly, note that it is now an evolution equation. However, the price to transform the non-evolution equation \eqref{1.0.1} into an evolution one is the arising of {\it non-local terms} given by the action of the operator $\Lambda^{-2}$, which is nothing but a convolution and brings non-local terms to \eqref{2.3.2}. 
\end{remark}
In view of Proposition \ref{prop2.1}, the Cauchy problem \eqref{1.0.2} is equivalent to
\bb\label{2.3.4}
\left\{
\ba{l}
\ds{u_t+(u+\Gamma)u_x=\Lambda^{-2}\p_x h(u)-\Lambda^{-2}\p_x\left(u^2+\f{u_x^2}{2}\right)-\lambda u},\quad t>0,\,\,\,x\in\R,\\
\\
u(0,x)=u_0(x),
\ea
\right.
\ee
where $h$ is given by \eqref{2.3.3}.

Let us assume that $u$ and $h(u)$ belongs to $H^{s}(\R)$, for a certain $s$. Since $H^{s}(\R)$ is continuously and densely embedded into $H^{s-1}(\R)$, then the right hand side of \eqref{2.3.2} can be viewed as a mapping $F:H^{s-1}(\R)\rightarrow H^{s-1}(\R)$, defined by
\bb\label{2.3.5}
F(u):=\Lambda^{-2}\p_x h(u)-\Lambda^{-2}\p_x\left(u^2+\f{u_x^2}{2}\right)-\lambda u,
\ee
where $h(u)$ is given by \eqref{2.3.3}. Moreover, if we define $A(u):=(u+\Gamma)\p_x$, we have a linear operator that maps a function $v\in H^s(\R)$ into $A(u)v=(u+\Gamma )v_x$. If we take $s>3/2$, $u$ and $v$ in $H^s(\R)$, then $v_x\in H^{s-1}(\R)$ and the algebra property holds, implying that $A(u)v\in H^{s-1}(\R)$. Moreover, if $s>1/2$, then the function $h$ satisfies the conditions in Lemma \ref{lema2.3}. 

We note that \eqref{2.3.4} is an equation in the variables $(t,x)$, but for each fixed $x$, it is an equation of the type \eqref{2.3.1}. We are ready to prove Theorem \ref{teo1.1}.

%These observations, jointly with the fact that now \eqref{2.3.4} is of the form \eqref{2.3.1}, {\it suggest} that the existence and uniqueness of \eqref{2.3.4} can be proved by using Kato's theorem with $X=H^{s-1}(\R)$ and $Y=H^{s}(\R)$. Our next result shows that this is, in fact, the correct way to pursue the well-posedness, at least at the local level.

{\bf Proof of Theorem \ref{teo1.1}.}
It is enough to verify that $A(u):=(u+\Gamma)\p_x$ and $F(u)$ given by \eqref{2.3.5} satisfy the conditions in Kato's theorem. From the results proved in \cite{silvajde2019,liu2011} we see that $A(u)$ satisfies all required conditions. 

Let $F(u)=f(u)+\lambda u$, with $f(u)=\Lambda^{-2}\p_x\left( h(u)-u^2-u_x^2/2\right)$. Then, for any norm $\|\cdot\|$, we have
$$
\|F(u)-F(v)\|\leq \|f(u)-f(v)\|+|\lambda|\|u-v\|.
$$
This means that $F$ satisfies the last condition in Kato's theorem if and only if $f$ does, which follows from the results proved in \cite{silvajde2019,liu2011,mustafa}, see also \cite{blanco}.
\quad\quad\quad\quad\quad\quad\quad\quad\quad\quad\quad\quad\quad\quad\quad\quad\quad\quad$\square$

\section{Time dependent conserved quantities}\label{sec3}

In this section we shall find some quantities that are conserved on the solutions of the equation in \eqref{1.0.2} for a suitable choice of the initial data. More specifically, we shall show that the quantities
$$
e^{\lambda t}\int_\R u(t,x)dx\quad\text{and}\quad e^{2\lambda t} \int_\R \left(u(t,x)^2+u_{x}(t,x)^2\right)dx
$$
are constants. This is an immediate consequence of Theorem \ref{teo1.2} that will play vital role in the investigation of the existence of global solutions and blow-up phenomena. 

The relations above show that the result of the integrals has exponential decaying and, in particular, it makes the Sobolev norm $\|\cdot\|_{H^1(\R)}$ of the solutions of \eqref{1.0.1} go to $0$ if $\lambda>0$ and $t\rightarrow\infty$, while for $\lambda<0$ it assures in a very simple, but elegant, way the presence of unbounded solutions.

{\bf Proof of Theorem \ref{teo1.2}.}
Observe that $u$ satisfies the requirements in Proposition \ref{prop1.1}. Substituting $v=u$ into \eqref{3.0.1} and \eqref{3.0.2}, noticing that $\left.E\right|_{v\equiv u}\equiv0$ and integrating, we obtain the results.
\quad\quad\quad$\square$

\begin{corollary}\label{cor3.1}
Equation \eqref{1.0.1} conserves energy if and only if $\lambda=0$. If $\lambda<0$, then both ${\cal H}_0(t)$ and ${\cal H}_1(t)$ are unbounded. If $\lambda>0$, then ${\cal H}_0(t),\,{\cal H}_1(t)\rightarrow0$ as $t\rightarrow\infty$.
\end{corollary}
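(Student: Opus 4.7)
The proof is essentially an immediate reading of the explicit formulas
$$
{\cal H}_0(t)=e^{-\lambda t}{\cal H}_0(0),\qquad {\cal H}_1(t)=e^{-2\lambda t}{\cal H}_1(0)
$$
furnished by Theorem \ref{teo1.2}, so I would simply quote that theorem at the start and then analyse the three cases $\lambda=0$, $\lambda<0$, $\lambda>0$ separately.

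For the first statement, I would note that since energy conservation by definition amounts to ${\cal H}_1(t)={\cal H}_1(0)$ for every $t\geq 0$, the identity above rearranges to $(e^{-2\lambda t}-1){\cal H}_1(0)=0$ for all $t$. Provided we exclude the trivial initial datum $u_0\equiv 0$ (for which ${\cal H}_1(0)=0$ and the claim is vacuous), this forces $e^{-2\lambda t}=1$ for every $t\geq 0$, hence $\lambda=0$. Conversely, $\lambda=0$ trivially yields ${\cal H}_0(t)\equiv{\cal H}_0(0)$ and ${\cal H}_1(t)\equiv{\cal H}_1(0)$.

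For $\lambda<0$ I would observe that $-\lambda>0$ and $-2\lambda>0$, so $e^{-\lambda t}\to\infty$ and $e^{-2\lambda t}\to\infty$ as $t\to\infty$; since ${\cal H}_1(0)\geq 0$ is strictly positive whenever $u_0\not\equiv 0$, this immediately yields ${\cal H}_1(t)\to\infty$, so $\{{\cal H}_1(t):t\geq 0\}$ is unbounded, and analogously for ${\cal H}_0$ whenever ${\cal H}_0(0)\neq 0$. Finally, for $\lambda>0$ both $e^{-\lambda t}$ and $e^{-2\lambda t}$ tend to $0$ as $t\to\infty$, so ${\cal H}_0(t),{\cal H}_1(t)\to 0$ regardless of the sign or magnitude of the initial values.

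There is no real obstacle here: the entire content is analytic behaviour of real exponentials, and the work has already been done in Theorem \ref{teo1.2}. The only mildly delicate point is the implicit assumption of nontrivial initial data in the second assertion (since ${\cal H}_0(0)$ can vanish even for $u_0\not\equiv 0$), which I would mention in passing so that the statement is read correctly.
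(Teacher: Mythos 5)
Your proposal is correct and follows exactly the route the paper intends: Corollary \ref{cor3.1} is stated as an immediate consequence of the explicit formulas ${\cal H}_0(t)=e^{-\lambda t}{\cal H}_0(0)$ and ${\cal H}_1(t)=e^{-2\lambda t}{\cal H}_1(0)$ from Theorem \ref{teo1.2}, and the case analysis on the sign of $\lambda$ is all that is needed. Your remark about excluding the trivial datum $u_0\equiv 0$ (and the possibility that ${\cal H}_0(0)=0$ for nontrivial $u_0$) is a welcome precision that the paper only makes implicitly in the surrounding discussion.
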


We observe that in the case $\lambda\geq0$, then $H_i(t)\leq H_i(0)$, $i=1,2$. Therefore, if $\lambda>0$ and $u_0\in H^1(\R)$, equation \eqref{3.0.5} implies that the Sobolev norm of the corresponding solution of \eqref{1.0.1} satisfying $u(0,x)=u_0(x)$ is time-dependent and decreases along time if $u_0\not\equiv0$. Then, by the Sobolev Embedding Theorem, 
$\|u\|^2_{L^\infty(\R)}\leq 2 {\cal H}_1(t)\leq 2 {\cal H}_1(0)=\|u_0\|^2_{H^1(\R)}<\infty.$

Assume that $u_0\not\equiv0$ and $\lambda<0$. Theorem \ref{teo1.2} implies that if $T=\infty$, then $|{\cal H}_1(t)|\rightarrow\infty$ as $t\rightarrow\infty$, meaning that the solution of \eqref{1.0.2} is unbounded.

We summarise the comments above in the next result.

\begin{corollary}\label{cor3.2}
Assume that the initial data $u_0$ of the problem \eqref{1.0.2} belongs to $H^s(\R)$, $s>3/2$. If $u_0=0$, then the solution is globally defined and $u(t,x)\equiv0$, for any $(t,x)\in[0,\infty)\times\R$ and any value of $\lambda$. If $u_0\not\equiv0$ and $\lambda<0$, then the solution is finite on each $[0,T]$, for any $T<\infty$, but is not bounded on $[0,\infty)$, while if $\lambda>0$, then the solution $u$ is bounded from above by $\|u_0\|_{H^1(\R)}$, for any $(t,x)\in[0,T)\times\R$.
\end{corollary}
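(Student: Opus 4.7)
The plan is to dispatch the three cases separately, each being a short corollary of Theorem \ref{teo1.1} (uniqueness) combined with the evolution of $\mathcal{H}_1(t)$ from Theorem \ref{teo1.2}. First I would handle the trivial case: $u \equiv 0$ is visibly a solution of \eqref{1.0.1} with zero initial data since every term in the equation vanishes identically, so by the uniqueness clause of Theorem \ref{teo1.1} it is the unique solution starting at $u_0 \equiv 0$, and the lifespan is then $[0,\infty)$ irrespective of $\lambda$.

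Next, for $u_0 \not\equiv 0$, the hypothesis $u_0 \in H^s(\R)$ with $s>3/2 \geq 1$ ensures $\mathcal{H}_1(0) = \tfrac{1}{2}\|u_0\|_{H^1(\R)}^2 > 0$. Theorem \ref{teo1.2} then gives the exact identity $\tfrac{1}{2}\|u(t,\cdot)\|_{H^1(\R)}^2 = \mathcal{H}_1(t) = e^{-2\lambda t}\mathcal{H}_1(0)$ on the maximal time interval of existence. For $\lambda<0$, the factor $e^{-2\lambda t}$ is bounded by $e^{-2\lambda T}$ on any $[0,T]$ with $T<\infty$, so the $H^1$ norm is finite on compact time intervals, while $e^{-2\lambda t}\to\infty$ as $t\to\infty$ forces unboundedness on $[0,\infty)$. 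For $\lambda>0$, the same identity collapses to the monotone bound $\|u(t,\cdot)\|_{H^1(\R)} = e^{-\lambda t}\|u_0\|_{H^1(\R)} \leq \|u_0\|_{H^1(\R)}$, and then an application of Lemma \ref{lema2.2} (Sobolev embedding, with $s=1>1/2$) in the form $\|u\|_{L^\infty(\R)}\leq \|u\|_{H^1(\R)}$ produces the pointwise bound $|u(t,x)|\leq \|u_0\|_{H^1(\R)}$ for every $(t,x) \in [0,T)\times\R$.

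The main conceptual care needed is to remember that Theorem \ref{teo1.2} controls only the $H^1$-norm, whereas Theorem \ref{teo1.1} provides local existence in $H^s$ with $s>3/2$. Accordingly, the claim in the $\lambda<0$ case that the solution is ``finite on $[0,T]$'' should be read at the $\mathcal{H}_1$ level, which is exactly what the energy identity delivers; likewise in the $\lambda>0$ case the $L^\infty$ bound is asserted on $[0,T)\times\R$ where $T$ is the lifespan from Theorem \ref{teo1.1}, not claimed a priori as $T=\infty$. With that interpretation, no further argument is required beyond inserting the formula of Theorem \ref{teo1.2} and the Sobolev embedding, and the global existence question is left to Section \ref{sec5} as the paper already announces.
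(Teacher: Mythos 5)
Your argument is correct and follows essentially the same route as the paper: the paper likewise disposes of the trivial data by uniqueness, reads off boundedness/unboundedness from the exact identity $\mathcal{H}_1(t)=e^{-2\lambda t}\mathcal{H}_1(0)$ of Theorem \ref{teo1.2}, and obtains the pointwise bound for $\lambda>0$ from $\|u\|_{L^\infty(\R)}^2\leq 2\mathcal{H}_1(t)\leq 2\mathcal{H}_1(0)=\|u_0\|_{H^1(\R)}^2$ via the Sobolev embedding. Your explicit caveat that the $\lambda<0$ unboundedness statement presupposes $T=\infty$ matches the paper's own phrasing of that case.
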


From Corollary \ref{cor3.2} we infer the presence of a necessary ingredient to the rising of wave breaking: if $\lambda>0$, then the solution $u$ is spatially bounded for $t<T$. In Section \ref{sec4} we shall retake this fact to find conditions to figure out wave breaking of the solutions of the problem \eqref{1.0.2}. 

{\bf Proof of Theorem \ref{teo1.3}.}
Consider the identity \eqref{3.0.1} with $v=u$. We can rewrite it as
$$
0=\lambda m+\f{\p m}{\p t}+\f{\p}{\p x}\left(\f{3}{2}u^2-uu_{xx}-u_x^2-\al u -\f{\be}{3}u^3-\f{\gamma}{4}u^4-\Gamma u_{xx}\right).
$$
Integrating the expression above over $\R$, we have
$$
\f{d}{dt}\int_{\R}m\,dx+\lambda \int_{\R}m\,dx=0.
$$
Solving the ODE above and taking into account that at $t=0$ we have $m(0,x)=m_0(x)$ and obtain the first equality in \eqref{3.0.6}.

The last equality is obtained in a similar way by integrating directly \eqref{3.0.1} with $v$ replaced by $u$.

It remains to be proved the middle equality, but it is a direct consequence of the fact $m=u-u_{xx}$ and $u_x\rightarrow0$ as $|x|\rightarrow\infty$.
\quad\quad\quad\quad\quad\quad\quad\quad\quad\quad\quad\quad\quad\quad\quad\quad\quad\quad\quad\quad\quad\quad\quad\quad\quad\quad\quad\quad\quad$\square$

\section{Blow-up scenario}\label{sec4}

Here we investigate the conditions for the occurrence of wave breaking of the solutions of \eqref{1.0.2}. Our main influence here is the text by Escher \cite{escher}, the works by Constantin and Escher \cite{const1998-1,const1998-2,const1998-3,const2000-1}, and Mustafa \cite{mustafa}. However, in view of the presence of the function $h(u)$ and the term $\lambda u$ in \eqref{2.3.4}, their ideas are not directly applicable to our problem. In fact, we can deal with the term $\lambda u$ following similar procedures presented in \cite{wujmp2006,wujde2009}. The main issue in our case is the term $h(u)$ in \eqref{2.3.4}, which we need to control in order to put our problem in a suitable place to be tractable. %, as it was done in \cite{silva2019} and also in \cite{mustafa}. Our strategy here is to seek the possibility of an intersection between these two venues, which at first sight are not connected, in order to have a suitable place to apply simultaneously the ideas we have mentioned.

\subsection{Preliminary results}

Here we determine {\it when} the solution of the equation possesses finite $H^3(\R)-$norm. We begin with the following result:

\begin{theorem}\label{teo4.1}
Let $u$ be a solution of \eqref{1.0.1} with initial data $u(0,x)=u_0(x)$ and $m_0=u_0(x)-u_0''(x)$. Assume that $m_0\in H^1(\R)$ and there exists a positive constant $k$ such that $u_x>-k$. Then there exist a function $\sigma\in C^{1}(\R)$ such that
$\|u\|_{H^3(\R)}\leq \sigma(t)\,\|m_0\|_{H^1}(\R)$. In particular, $u$ does not blow up in finite time.
\end{theorem}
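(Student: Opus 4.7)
The plan is to reduce the claim to an a priori bound on $\|m(t)\|_{H^1(\R)}$ and to obtain it from two successive energy estimates, linked by a bootstrap that turns an $L^2$-bound on $m$ into an $L^\infty$-bound on $u_x$. Since $\Lambda^2=1-\p_x^2$ is an isometric isomorphism between $H^3(\R)$ and $H^1(\R)$, we have the identity $\|u\|_{H^3(\R)}=\|m\|_{H^1(\R)}$, so it suffices to produce $\sigma\in C^1(\R)$ with $\|m(t)\|_{H^1(\R)}\leq\sigma(t)\|m_0\|_{H^1(\R)}$. A key input is Theorem \ref{teo1.2}: the identity $\|u(t)\|_{H^1(\R)}=e^{-\lambda t}\|u_0\|_{H^1(\R)}$ is available on every bounded time interval, and by the Sobolev Embedding Theorem so are $\|u(t)\|_{L^\infty(\R)}$ and $\|u_x(t)\|_{L^2(\R)}$.

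For the $L^2(\R)$ estimate on $m$, I would multiply the first equation in \eqref{1.0.2} by $m$ and integrate. The transport term contributes $\int u m m_x\,dx=-\tfrac{1}{2}\int u_x m^2\,dx$, giving
\bb
\f{1}{2}\f{d}{dt}\|m\|_{L^2(\R)}^{2}+\f{3}{2}\int_\R u_x m^2\,dx+\lambda\|m\|_{L^2(\R)}^{2}=\int_\R(\al u_x+\be u^2 u_x+\gamma u^3 u_x+\Gamma u_{xxx})m\,dx.
\ee
The hypothesis $u_x>-k$ bounds the second term below by $-\tfrac{3k}{2}\|m\|_{L^2(\R)}^{2}$; on the right-hand side, writing $\Gamma u_{xxx}=\Gamma(u_x-m_x)$ and using $\int m\,m_x\,dx=0$ removes the top-order term, and every remaining integrand is dominated via Cauchy--Schwarz in terms of $\|u\|_{H^1(\R)}$ and $\|m\|_{L^2(\R)}$. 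Gronwall's inequality (Lemma \ref{lema2.4}) then yields a $C^1$ bound $\|m(t)\|_{L^2(\R)}\leq\sigma_0(t)\|m_0\|_{L^2(\R)}$. Bootstrap: since $u=g\ast m$ with $g(y)=e^{-|y|}/2$, Young's convolution inequality gives $\|u_x\|_{L^\infty(\R)}\leq\|g'\|_{L^2(\R)}\|m\|_{L^2(\R)}=\tfrac{1}{2}\|m\|_{L^2(\R)}$, so that $\|u_x(t)\|_{L^\infty(\R)}$ is controlled by the same $\sigma_0(t)$.

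For the $L^2(\R)$ estimate on $m_x$ I would differentiate the $m$-equation in $x$, multiply by $m_x$, and integrate. Using $\int u m_x m_{xx}\,dx=-\tfrac{1}{2}\int u_x m_x^2\,dx$, $u_{xx}=u-m$, and $\int m^2 m_x\,dx=0$, the transport side collapses to
\bb
\f{1}{2}\f{d}{dt}\|m_x\|_{L^2(\R)}^{2}+\f{5}{2}\int_\R u_x m_x^2\,dx-\int_\R u_x m^2\,dx+\lambda\|m_x\|_{L^2(\R)}^{2}=\int_\R R_x\,m_x\,dx,
\ee
where $R$ denotes the full right-hand side of the evolution equation for $m$. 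The lower bound on $u_x$ controls the $m_x^2$-term, and the freshly-obtained $\|u_x\|_{L^\infty}$-bound controls $\int u_x m^2$. The main expected obstacle is the contribution of $(\be u^2 u_x)_x$ and $(\gamma u^3 u_x)_x$ to $\int R_x\,m_x$: a direct integration by parts would place the derivative on $m_x$ and produce an uncontrolled $m_{xx}$. The remedy is to expand each of these products and substitute $u_{xx}=u-m$, so that every resulting integrand is a polynomial in $u$ and $u_x$ multiplied by $m$ or $m^2$; at that point the already-established bounds on $\|u\|_{L^\infty}$, $\|u_x\|_{L^2}$, $\|u_x\|_{L^\infty}$, and $\|m\|_{L^2}$ close the estimate. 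The $\al u_{xx}$ and $\Gamma u_{xxxx}$ contributions are handled analogously via $u_{xxx}=u_x-m_x$ and $u_{xxxx}=u_{xx}-m_{xx}$, combined with $\int m_x m_{xx}\,dx=0$. Adding this identity to the previous one and applying Gronwall's inequality once more produces $\sigma\in C^1(\R)$ with $\|m(t)\|_{H^1(\R)}\leq\sigma(t)\|m_0\|_{H^1(\R)}$, which proves the theorem and in particular rules out finite-time blow-up in $H^3(\R)$.
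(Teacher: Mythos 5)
Your proposal is correct and rests on the same skeleton as the paper's proof: the isometry $\|u\|_{H^3(\R)}=\|m\|_{H^1(\R)}$, an energy estimate for $\|m\|_{H^1(\R)}^2$ in which the hypothesis $u_x>-k$ controls the cubic terms $\int u_x m^2$ and $\int u_x m_x^2$, and Gronwall. Where you genuinely diverge is in the treatment of the right-hand side $\al u_x+\be u^2u_x+\gamma u^3u_x+\Gamma u_{xxx}$: the paper absorbs $\Gamma u_{xxx}$ into the transport term and writes the rest as $\p_x h(u)$ with $h(u)=(\al+\Gamma)u+\tfrac{\be}{3}u^3+\tfrac{\gamma}{4}u^4$, then uses the identity $\langle m,\p_x h\rangle+\langle m_x,\p_x^2h\rangle=\langle m,\Lambda^2\p_x h\rangle$ together with a composition estimate $\|h(u)\|_{H^3(\R)}\leq c_1\|u\|_{H^3(\R)}=c_1\|m\|_{H^1(\R)}$, with $c_1$ depending only on $\|u_0\|_{H^1(\R)}$; you instead run the $L^2$ and $\dot H^1$ estimates sequentially, expand the derivatives explicitly with $u_{xx}=u-m$, and interpolate via the bootstrap $\|u_x\|_{L^\infty(\R)}\leq\tfrac12\|m\|_{L^2(\R)}$ obtained from the first stage. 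Your route is more elementary (no Moser-type composition lemma) and the bootstrap neatly supplies the upper bound on $u_x$ needed for the sign-indefinite term $\int u_x m^2$ in the second stage, which the paper instead handles by summing the two identities so that this term acquires a favorable sign; the price is a longer case-by-case bookkeeping of the expanded nonlinearities (some of which end up multiplied by $m_x$ or $mm_x$ rather than by $m$ or $m^2$ as you state, though Cauchy--Schwarz still closes them). One point you should make explicit: to obtain the \emph{multiplicative} bound $\|m(t)\|_{L^2(\R)}\leq\sigma_0(t)\|m_0\|_{L^2(\R)}$ rather than one with an additive constant, the terms of the form $C\|u\|_{H^1(\R)}\|m\|_{L^2(\R)}$ must be converted into $C'\|m\|_{L^2(\R)}^2$ via the elementary inequality $\|u\|_{H^1(\R)}=\|m\|_{H^{-1}(\R)}\leq\|m\|_{L^2(\R)}$; with that observation inserted, the argument closes.
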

\begin{proof}
We begin with recalling that $\Lambda^{-2}$ is a isometric isomorphism between $H^{s}$ and $H^{s+2}$. Moreover, since $m=u-u_{xx}$, then $u=\Lambda^{-2}m$. Our strategy in the present demonstration is to prove that $\|m\|_{H^1(\R)}\leq\sigma(t)\|m_0\|_{H^1(\R)}$, for a certain $\sigma\in C^1(\R)$. The result is then obtained from the relations 
$\|u\|_{H^3(\R)}=\|\Lambda^{-2}m\|_{H^3(\R)}=\|m\|_{H^1(\R)}$.

Note that 
\bb\label{4.0.6}
\f{d}{d t}\|m\|^2_{H^1(\R)}=\f{d}{d t}\|m\|^2_{L^2(\R)}+\f{d}{d t}\|m_x\|^2_{L^2(\R)}=2\left(\langle m,m_t\rangle_{L^2(\R)}+\langle m_x,m_{tx}\rangle_{L^2(\R)}\right).
\ee
Let us find the parcels of the right hand side of \eqref{4.0.6}. From \eqref{1.0.3} and \eqref{2.3.3} we have
\bb\label{4.0.7}
m_t=-(u+\Gamma)m_x-2u_xm-\lambda m+\p_xh(u).
\ee
Therefore,
\bb\label{4.0.8}
\ba{lcl}
\langle m,m_t\rangle_{L^2(\R)}&=&\ds{-\langle m,um_x\rangle_{L^2(\R)}-\Gamma\langle m,m_x\rangle_{L^2(\R)}-2\langle u_x,m^2\rangle_{L^2(\R)}}\\
\\
&&\ds{-\lambda\langle m,m\rangle_{L^2(\R)}+\langle m,\p_x h(u)\rangle_{L^2(\R)}}\\
\\
&=&\ds{-\f{3}{2}\langle u_x,m^2\rangle-\lambda\|m\|^2_{L^2(\R)}+\langle m,\p_x h(u)\rangle},
\ea
\ee
where we used the relations $\langle m,u_xm\rangle=\langle u_x,m^2\rangle$, $\langle m,um_x\rangle=\langle u,m m_x\rangle=-\langle u_x,m^2\rangle/2$ and
$$\langle m,m_x\rangle=\int_{\R}m m_x dx=\f{1}{2}\int_\R\p_{x}m^2 dx=0.$$

Deriving \eqref{4.0.7} with respect to $x$ and substituting the result into $\langle m_x,m_{tx}\rangle_{L^2(\R)}$ yield
\bb\label{4.0.9}
\ba{lcl}
\langle m_x,m_{tx}\rangle_{L^2(\R)}&=&\ds{-\langle m_x,um_{xx}\rangle_{L^2(\R)}-\Gamma m_x,m_{xx}\rangle_{L^2(\R)}-\langle m_x,u_xm_x\rangle_{L^2(\R)}}\\
\\
&&\ds{-2\langle m_x,u_{xx}m\rangle_{L^2(\R)}-2\langle m_x,u_xm_x\rangle_{L^2(\R)}}\\
\\
&&\ds{-\lambda\langle m_x,m_x\rangle_{L^2(\R)}+\langle m_x,p_x^2h(u)\rangle_{L^2(\R)}}\\
\\
&=&\ds{-\f{5}{2}\langle u_x,m_x^2\rangle_{L^2(\R)}-2\langle u_{xx},mm_x\rangle_{L^2(\R)}}\\
\\
&&\ds{-\lambda\|m_x\|^2_{L^2(\R)}+\langle m_x,\p_{x}^2h(u)\rangle_{L^2(\R)}}\\
\\
&=&\ds{-\f{5}{2}\langle u_x,m_x^2\rangle_{L^2(\R)}+\langle u_{x},m^2\rangle_{L^2(\R)}}\\
\\
&&\ds{-\lambda\|m_x\|^2_{L^2(\R)}+\langle m_x,\p_{x}^2}h(u)\rangle_{L^2(\R)},
\ea
\ee
where we used $\langle u_{xx},mm_x\rangle_{L^2(\R)}=\langle u,\p_{x} m^2\rangle_{L^2(\R)}/2-\langle m,mm_x\rangle_{L^2(\R)}=-\langle u_x,m^2\rangle_{L^2(\R)}/2$.

From \eqref{4.0.8}, \eqref{4.0.9} and after some manipulation, we have
\bb\label{4.0.10}
\ba{lcl}
\ds{\f{d}{d t}\|m\|^2_{H^1(\R)}}&=&\ds{-\langle u_x,m^2\rangle_{L^2(\R)}-5\langle u_x,m_x^2\rangle_{L^2(\R)}-2\lambda \|m\|_{H^1(\R)}^2}\\
\\
&&\ds{+2\left(\langle m,\p_x^2 h(u)\rangle_{L^2(\R)}+\langle m_x,\p_x^2 h(u)\rangle_{L^2(\R)}\right)}.
\ea
\ee

Let $I:=\langle m,\p_x^2 h(u)\rangle_{L^2(\R)}+\langle m_x,\p_x^2 h(u)\rangle_{L^2(\R)}$. Then
$$
\ba{lcl}
I&=&\langle m,\p_x^2 h(u)\rangle_{L^2(\R)}+\langle m_x,\p_x^2 h(u)\rangle_{L^2(\R)}\\
\\
&=&\langle m,\p_{x}^2h(u)-\p_x^3h(u)\rangle_{L^2(\R)}=\langle m,\Lambda^2\p_x h(u)\rangle_{L^2(\R)}\leq\|m\|_{L^2(\R)}\|\Lambda^2\p_x h(u)\|_{L^2(\R)}.
\ea
$$
Since $2ab\leq a^2+b^2$, for any real numbers $a$ and $b$, we have
$$
I\leq \f{\|\Lambda^2\p_x h(u)\|^2_{L^2(\R)}+\|m\|_{L^2(\R)}^2}{2}.
$$
We still have the inequality $\|\Lambda^2\p_x h(u)\|_{L^2(\R)}\leq\|\p_x h(u)\|_{H^2(\R)}\leq\|h(u)\|_{H^3(\R)}$. 

{\bf Claim.} There exists a constant $c_1$ depending only on $\|u_0\|_{H^1(\R)}$ such that $\|h(u)\|_{H^3(\R)}\leq c_1\|u\|_{H^3(\R)}$.

We note that if our claim is true, then $\|h(u)\|_{H^3(\R)}\leq c_1\|u\|_{H^3(\R)}=c_1\|\Lambda^{-2}u\|_{H^1(\R)}=c_1\|m\|_{H^1(\R)}$
and since $\|m\|_{L^2(\R)}\leq \|m\|_{H^1(\R)}$ we conclude that
\bb\label{4.0.11}
\langle m,\p_x^2 h(u)\rangle_{L^2(\R)}+\langle m_x,\p_x^2 h(u)\rangle_{L^2(\R)}\leq c\|m\|^2_{H^1(\R)},
\ee
for some positive constant $c$. In addition, we have
\bb\label{4.0.12}
\ba{lcl}
\ds{-\langle u_x,m^2\rangle_{L^2(\R)}-5\langle u_x,m_x^2\rangle_{L^2(\R)}}&=&\ds{\int_{\R}(-u_x)(m^2+5m_x^2)dx\leq k\int_{\R}(m^2+5m_x^2)dx}\\
\\
&\leq& 5k\|m\|_{H^1(\R)}^2.
\ea
\ee
Substitution of \eqref{4.0.11} and \eqref{4.0.12} into \eqref{4.0.10} reads
$$
\f{d}{d t}\|m\|^2_{H^1(\R)}\leq (5k+c+2\lambda)\|m\|_{H^1(\R)}^2.
$$
Using the Gronwall inequality, we conclude that
$
\|m\|_{H^1(R)}^2\leq e^{(5k+c+2\lambda) t} \|m_0\|^2_{H^1(\R)}=:\sigma(t)^2\|m_0\|^2_{H^1(\R)},
$
which is sufficient to have the result proved.

To conclude the proof, we must now prove the claim. To do it, note that
$$
h(u)=\int_0^1uh'(su)ds.
$$
From the last equality we easily conclude that $|h(u)|\leq |u|\sup\{|h'(r)|,\,|r|\leq \|u\|_{L^\infty(\R)}\}$. Moreover, we also know that $\|u\|_{L^\infty(\R)}\leq\|u\|_{H^1(\R)}$ (see, for instance, \cite{mustafa}, page 1396). This, combined with the fact that $\|u\|_{H^1(\R)}\leq \|u_0\|_{H^1(\R)}$ implies that $|h(u)|\leq |u|\sup\{|h'(r)|,\,|r|\leq \|u_0\|_{H^1(\R)}\}$. Let $c_1:=\sup\{|h'(r)|,\,|r|\leq \|u_0\|_{H^1(\R)}\}$. Then $|h(u)|\leq c_1|u|$, which yields the desired result.
\end{proof}

Observe that $H^3(\R)\subseteq H^s(\R)$, for any $s\leq 3$. Moreover, this embedding is dense and continuous. This proves the following

\begin{corollary}\label{cor4.2}
Let $u_0\in H^s$, $s\geq3/2$, and $u$ be the corresponding solution to \eqref{1.0.1} with initial data $u(0,x)=u_0(x)$. Assume that $u_x>-k$, for some positive constant $k$. Then $\|u\|_{H^s(\R)}\leq \sigma(t)\|u_0\|_{H^s(\R)}$, for a certain positive function $\sigma\in C^1(\R)$.
\end{corollary}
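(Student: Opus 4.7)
The corollary aims to extend the a priori bound of Theorem \ref{teo4.1} from $H^3(\R)$ to a general Sobolev level $H^s(\R)$ with $s \geq 3/2$, and the natural plan is to repeat the energy computation of that theorem directly at the appropriate level. It is worth emphasizing that the embedding argument hinted in the text, namely $H^3(\R) \hookrightarrow H^s(\R)$ for $3/2 \leq s \leq 3$, gives only $\|u\|_{H^s(\R)} \leq \|u\|_{H^3(\R)} \leq \sigma(t)\|u_0\|_{H^3(\R)}$ via Theorem \ref{teo4.1}, and a density argument cannot replace $\|u_0\|_{H^3(\R)}$ by $\|u_0\|_{H^s(\R)}$ on the right-hand side: to obtain the stated inequality one truly needs the $H^s$ energy method.

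The concrete plan uses the $m$-equation \eqref{4.0.7}, namely $m_t + (u+\Gamma)m_x + 2u_x m + \lambda m = \p_x h(u)$, together with the isometric isomorphism $\Lambda^{-2}:H^{s-2}(\R) \to H^s(\R)$, which gives $\|u\|_{H^s(\R)} = \|m\|_{H^{s-2}(\R)}$. I would compute $\tfrac{d}{dt}\|m\|_{H^{s-2}(\R)}^2 = 2\langle m_t, \Lambda^{2(s-2)}m\rangle_{L^2(\R)}$ and estimate each piece. The convective contribution $\langle (u+\Gamma)m_x, \Lambda^{2(s-2)}m\rangle_{L^2(\R)}$ is handled by integration by parts (the constant $\Gamma$ part vanishes) together with a Kato--Ponce commutator estimate, yielding a bound of the form $C\|u_x\|_{L^\infty(\R)}\|m\|_{H^{s-2}(\R)}^2$; the reaction term $2\langle u_xm, \Lambda^{2(s-2)}m\rangle_{L^2(\R)}$ is bounded similarly. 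The source is controlled via $\|\p_x h(u)\|_{H^{s-2}(\R)} \leq \|h(u)\|_{H^{s-1}(\R)}$ together with Lemma \ref{lema2.3}, whose hypothesis is supplied by Sobolev embedding and by the bound $\|u\|_{H^1(\R)}\leq \|u_0\|_{H^1(\R)}$ of Theorem \ref{teo1.2}; this produces $\|h(u)\|_{H^{s-1}(\R)} \leq c_1\|m\|_{H^{s-2}(\R)}$ with $c_1 = c_1(\|u_0\|_{H^1(\R)})$. Combining everything yields $\tfrac{d}{dt}\|m\|_{H^{s-2}(\R)}^2 \leq C\|m\|_{H^{s-2}(\R)}^2$ for a constant $C=C(k,\lambda,\|u_0\|_{H^1(\R)})$, and the Gronwall inequality (Lemma \ref{lema2.4}) delivers $\|m\|_{H^{s-2}(\R)} \leq e^{Ct/2}\|m_0\|_{H^{s-2}(\R)}$, i.e. $\|u\|_{H^s(\R)} \leq \sigma(t)\|u_0\|_{H^s(\R)}$ with $\sigma(t) := e^{Ct/2}$.

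The main obstacle is that the Kato--Ponce commutator estimate naturally involves $\|u_x\|_{L^\infty(\R)}$, whereas the hypothesis supplies only the one-sided bound $u_x > -k$. In Theorem \ref{teo4.1} this asymmetry is absorbed because the $u_x$-dependent terms enter as $\int(-u_x)(\text{non-negative})\,dx$, so that only the lower bound on $u_x$ is needed. Reproducing this cancellation at the $H^s$ level --- in effect, rewriting the transport piece symmetrically so that only $\sup_x(-u_x) \leq k$ appears in the final estimate --- is the technical heart of the proof, and without it the argument would require an upper bound on $u_x$ that the assumptions do not provide.
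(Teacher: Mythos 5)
Your route is genuinely different from the paper's, and your criticism of the embedding argument is well taken. The paper derives this corollary in one line from the observation that $H^3(\R)\subseteq H^s(\R)$ densely and continuously for $s\leq 3$; as you note, that only yields $\|u\|_{H^s(\R)}\leq\|u\|_{H^3(\R)}\leq\sigma(t)\|m_0\|_{H^1(\R)}$ when $u_0\in H^3(\R)$, it says nothing when $u_0\in H^s(\R)\setminus H^3(\R)$ (for $3/2\leq s<3$ one only has $m_0\in H^{s-2}(\R)$, so Theorem \ref{teo4.1} does not even apply), it does not convert $\|u_0\|_{H^3(\R)}$ into $\|u_0\|_{H^s(\R)}$ on the right-hand side, and it is silent for $s>3$. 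A direct energy estimate on $m$ in $H^{s-2}(\R)$ via a Kato--Ponce commutator is the natural way to obtain the inequality exactly as stated, so on this point your plan is sounder than the printed justification.

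However, your proposal stops precisely at the step that would make it a proof. The commutator estimate for the transport term produces $\|u_x\|_{L^\infty(\R)}$, and you correctly observe that the hypothesis controls only $\inf_x u_x$; but you then leave ``reproducing the cancellation'' as an unperformed task, and for general (non-integer) $s$ there is no sign structure to exploit: the commutator contributions are not of the form $\int(-u_x)(\text{nonnegative})\,dx$ as they are in the $H^1$ computation for $m$. The standard way to close this is not a cancellation but an additional a priori bound on $\sup_x u_x$. Evaluating \eqref{5.0.2} along the characteristics $q(t,x)$ of \eqref{4.0.1} gives $\f{d}{dt}\bigl(u_x\circ q\bigr)=-\f{1}{2}(u_x\circ q)^2-\lambda (u_x\circ q)+u^2-F(u)-G(u)$ with $F,G$ as in \eqref{5.0.5}--\eqref{5.0.6}; since $u^2$, $F(u)\geq0$ and $|G(u)|$ are bounded on $[0,T)$ in terms of $\|u_0\|_{H^1(\R)}$ (Theorem \ref{teo1.2} and Lemma \ref{lema2.3}), the quadratic damping $-\f{1}{2}(u_x\circ q)^2$ forces $\sup_x u_x(t,\cdot)\leq\max\{\sup_x u_0'(x),R\}$ for an explicit constant $R$. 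Combined with $u_x>-k$ this yields $\|u_x\|_{L^\infty(\R)}\leq C$ on $[0,T)$, after which your Gronwall step closes. Without this (or an equivalent) ingredient the proposal is incomplete; with it, it proves more than the paper's own argument does.
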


\subsection{Wave breaking criteria}

We recall that wave breaking occurs when
$$\sup_{(t,x)\in[0,T)\times\R}|u(t,x)|<
\infty,\quad\text{and}\quad\lim\sup\limits_{t\rightarrow T}\left(\sup_{x\in\R}|u_x(t,x)|\right)=\infty.$$

Corollary \ref{cor3.1} implies that if $\lambda>0$, then $u$ is bounded, which is a necessary condition for the appearance of wave breaking. On the other hand, Theorem \ref{teo4.1} assures that if $u$ is a solution of \eqref{1.0.2} and $u_x$ is bounded from below, then we do not have wave breaking. The consequence of these facts is: If we want to investigate wave breaking, then we must look for solutions having $u_x$ with no lower bound. We observe that if $u_x(t,x)<0$, then we should replace $\sup$ and $\infty$ by $\inf$ and $-\infty$, respectively, in the conditions above whenever we replace $|u_x(t,x)|$ by $u_x(t,x)$. To understand what we are bound to do, let $y(t):=\inf\{u_x(t,x),\,x\in\R\}$, which we simply write as $y(t):=\inf\limits_{x}u_x(t,x)$ from now on. Then, the wave breaking will occur if
$$-\infty=\lim\inf_{t\rightarrow T}\left(\inf_{x\in\R}u_x(t,x)\right)=\lim_{t\rightarrow T}\inf y(t)=\lim_{t\rightarrow T}\inf_{t\leq s}y(s).$$

\begin{lemma}\label{lema4.1}
Let $T>0$ and $v\in C^1([0,T),H^2(\R))$ be a given function. Then, for any $t\in[0,T)$, there exists at least one point $\xi(t)\in\R$ such that
\bb\label{5.0.1}
y(t)=\inf_{x\in\R}{v_x(t,x)}=v_x(t,\xi(t))
\ee
and the function $y$ is almost everywhere differentiable (a.e) in $(0,T)$, with $y'(t)=v_{tx}(t,\xi(t))$ {\it a.e.} on $(0,T)$.
\end{lemma}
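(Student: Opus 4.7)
The plan is to break the proof into three stages: (1) produce the minimizer $\xi(t)$, (2) show $y$ is locally Lipschitz on $[0,T)$ so that Rademacher's theorem supplies a.e. differentiability, and (3) identify $y'(t)$ at points of differentiability by using $\xi(t)$ as a test point at nearby times.

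For (1), the Sobolev embedding $H^{2}(\R)\hookrightarrow C^{1}_{0}(\R)$ gives that, for each fixed $t\in[0,T)$, the function $x\mapsto v_{x}(t,x)$ lies in $C^{0}_{0}(\R)$. Hence, if $v_{x}(t,\cdot)\not\equiv 0$ the infimum is attained at some point $\xi(t)\in\R$, and if $v_{x}(t,\cdot)\equiv 0$ any $\xi(t)$ will serve. Some selection is then fixed.

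For (2), because $v\in C^{1}([0,T),H^{2}(\R))$, the map $\tau\mapsto v_{tx}(\tau,\cdot)$ is continuous from $[0,T)$ into $H^{1}(\R)$, and Lemma \ref{lema2.2} gives $M_{T'}:=\sup_{\tau\in[0,T']}\|v_{tx}(\tau,\cdot)\|_{L^{\infty}(\R)}<\infty$ on every compact $[0,T']\subset[0,T)$. Since, for each fixed $x\in\R$, evaluation at $x$ is a continuous linear functional on $H^{2}(\R)$, the scalar map $\tau\mapsto v_{x}(\tau,x)$ is in $C^{1}([0,T))$ with derivative $v_{tx}(\tau,x)$, so
$$|v_{x}(t,x)-v_{x}(s,x)|\leq M_{T'}|t-s|,\qquad t,s\in[0,T'].$$
Using $\xi(s)$ as a test point at time $t$ gives $y(t)\leq v_{x}(t,\xi(s))\leq v_{x}(s,\xi(s))+M_{T'}|t-s|=y(s)+M_{T'}|t-s|$, and interchanging $s$ and $t$ yields $|y(t)-y(s)|\leq M_{T'}|t-s|$. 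Hence $y$ is locally Lipschitz on $[0,T)$ and therefore differentiable almost everywhere by the classical Rademacher-type result for Lipschitz functions of one real variable.

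For (3), let $t\in(0,T)$ be a point where $y'(t)$ exists. For any $h$ with $t+h\in(0,T)$, using $\xi(t)$ as a test point at time $t+h$ we obtain
$$y(t+h)-y(t)\leq v_{x}(t+h,\xi(t))-v_{x}(t,\xi(t))=\int_{0}^{h}v_{tx}(t+\sigma,\xi(t))\,d\sigma.$$
Dividing by $h>0$ and letting $h\to 0^{+}$ gives $y'(t)\leq v_{tx}(t,\xi(t))$, by continuity in $\tau$ of $v_{tx}(\tau,\xi(t))$. For $h<0$ the same chain of inequalities holds, but division by the negative number $h$ reverses the sign, and letting $h\to 0^{-}$ yields $y'(t)\geq v_{tx}(t,\xi(t))$. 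Combining, $y'(t)=v_{tx}(t,\xi(t))$ a.e. on $(0,T)$.

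The main obstacle is conceptual rather than computational: the minimizer $\xi(t)$ need not be unique and the selection $t\mapsto\xi(t)$ need not be continuous or even measurable. The argument above sidesteps this issue by always using $\xi(t)$ (or $\xi(s)$) only as a \emph{test point} at neighbouring times, never comparing $\xi(t+h)$ to $\xi(t)$. This way no regularity of the selection is needed to obtain both the Lipschitz bound and the matching upper and lower limits that pin down $y'(t)$.
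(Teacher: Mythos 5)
Your argument is correct in structure and conclusion, but it is worth noting that the paper does not actually prove this lemma: it simply cites Theorem 2.1 of \cite{const1998-2} (equivalently Theorem 5 of \cite{escher}). What you have written is, in essence, a self-contained reconstruction of that standard Constantin--Escher proof: attainment of the infimum via the embedding $H^2(\R)\hookrightarrow C^1_0(\R)$, a local Lipschitz bound for $y$ obtained by using a minimizer at one time as a test point at the other, a.e.\ differentiability of Lipschitz functions, and the two one-sided difference quotients that squeeze $y'(t)$ onto $v_{tx}(t,\xi(t))$. Your closing remark about never needing any regularity (or even measurability) of the selection $t\mapsto\xi(t)$ is exactly the right point and is what makes the argument work.

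One step in stage (1) is stated too quickly. For a general $f\in C^0_0(\R)$ with $f\not\equiv 0$ the infimum need \emph{not} be attained (take a strictly positive bump decaying to zero: its infimum is $0$ and is never achieved), so "$v_x(t,\cdot)\in C^0_0(\R)$ and $v_x(t,\cdot)\not\equiv 0$" does not by itself yield a minimizer. The missing observation is that $v(t,\cdot)\in H^2(\R)$ vanishes at both $\pm\infty$, hence $\int_\R v_x(t,x)\,dx=0$; consequently, if $v_x(t,\cdot)\not\equiv 0$ it must take strictly negative values, so $\inf_{x}v_x(t,x)<0=\lim_{|x|\to\infty}v_x(t,x)$ and the infimum is attained on a compact set. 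With that one line inserted, the proof is complete and matches the cited reference.
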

\begin{proof}
See Theorem 2.1 in \cite{const1998-2} or Theorem 5 in \cite{escher}.
\end{proof}

Let us consider equation \eqref{2.3.2}, with $h$ given by \eqref{2.3.3}. Differentiating \eqref{2.3.2} with respect to $x$, we obtain
\bb\label{5.0.2}
u_{tx}+\f{u_x^2}{2}+(u+\Gamma)u_{xx}+\lambda u_x=u^2-\Lambda^{-2}\left(u^2+\f{u_x^2}{2}\right)-h(u)-\Lambda^{-2}h(u),
\ee
were we used the identity $\p_x^2\Lambda^{-2}=\Lambda^{-2}-1$.

We observe that \eqref{5.0.2} holds to any $(t,x)$ where $u$ is defined. If $u_0\in H^3(\R)$, then $u(t,\cdot)\in C^1(\R)$ and satisfies the conditions required in Lemma \ref{lema4.1}. Defining 
$$%\bb\label{5.0.3}
y(t)=\inf_{x\in\R}u_x(t,x)=u_x(t,\xi(t)),
$$%\ee
evaluating equation \eqref{5.0.2} at $(t,\xi(t))$ and noticing that $u_{xx}(t,\xi(t))=0$, we arrive at the following ordinary differential equation to $y$:
\bb\label{5.0.4}
y'(t)+\f{y(t)^2}{2}+\lambda y(t)=u(t)^2-F(u(t))-G(u(t)),
\ee
where $u(t):=u(t,\xi(t))$, 
\bb\label{5.0.5}
F(u):=\Lambda^{-2}\left(u^2+\f{u_x^2}{2}\right)
\ee
and
\bb\label{5.0.6}
G(u):=h(u)+\Lambda^{-2}h(u).
\ee

\begin{proposition}\label{prop4.1}
Let $F$ and $G$ be given by \eqref{5.0.5} and \eqref{5.0.6}, respectively. If $u\in H^1(\R)$, then $F(u(t))\geq u(t)^2/2$ and $|G(u(t))|\leq 3\|h(u)\|_{H^1(\R)}$.
\end{proposition}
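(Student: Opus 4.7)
Both inequalities are pointwise at $x=\xi(t)$, so I would treat $F(u)$ and $G(u)$ as functions of $x$ (with $t$ frozen), establish pointwise bounds for arbitrary $x\in\R$, and then specialize to $x=\xi(t)$. For this it is crucial to use the convolution representation $\Lambda^{-2}f(x)=\tfrac12\int_{\R}e^{-|x-y|}f(y)\,dy$.

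\textbf{Inequality on $F$.} The plan is to split the convolution at $y=x$ and integrate by parts. Write
\[
2F(u)(x)=\int_{-\infty}^{x}e^{-(x-y)}\Bigl(u^{2}+\tfrac{u_x^{2}}{2}\Bigr)dy+\int_{x}^{+\infty}e^{-(y-x)}\Bigl(u^{2}+\tfrac{u_x^{2}}{2}\Bigr)dy.
\]
On the left half-line, the elementary estimate $(u-u_x)^{2}\ge 0$ gives $2u^{2}+u_x^{2}\ge u^{2}+2uu_x$, so I can afford to replace $2(u^{2}+u_x^{2}/2)=2u^{2}+u_x^{2}$ by the lower bound $u^{2}+2uu_x=u^{2}+(u^{2})_y$. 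Then integration by parts on $\int_{-\infty}^{x}e^{-(x-y)}(u^{2})_y\,dy$ (using $\partial_y e^{-(x-y)}=e^{-(x-y)}$ and the decay of $u$) produces exactly $u(x)^{2}-\int_{-\infty}^{x}e^{-(x-y)}u^{2}dy$, which telescopes with the other $u^{2}$ term and leaves $u(x)^{2}$. Symmetrically, on the right half-line I use $2u^{2}+u_x^{2}\ge u^{2}-2uu_x=u^{2}-(u^{2})_y$, integrate by parts, and again obtain $u(x)^{2}$. Adding the two halves gives $2F(u)(x)\ge u(x)^{2}$, hence $F(u)(x)\ge u(x)^{2}/2$ for every $x$, and in particular at $x=\xi(t)$.

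\textbf{Inequality on $G$.} The plan is to control each summand of $G(u)=h(u)+\Lambda^{-2}h(u)$ in $L^{\infty}$ via the $H^{1}$-norm of $h(u)$. The one-dimensional Sobolev embedding (as recalled in Lemma~\ref{lema2.2}, proved by the standard identity $f(x)^{2}=2\int_{-\infty}^{x}ff'\,dy\le \|f\|_{L^{2}}^{2}+\|f_x\|_{L^{2}}^{2}$) gives $\|f\|_{L^{\infty}(\R)}\le \|f\|_{H^{1}(\R)}$ for any $f\in H^{1}(\R)$. Applied to $f=h(u)$ this handles the first term directly. For the nonlocal piece, I would use Young's inequality together with $\|\tfrac12 e^{-|\cdot|}\|_{L^{1}(\R)}=1$ to conclude $\|\Lambda^{-2}h(u)\|_{L^{\infty}(\R)}\le \|h(u)\|_{L^{\infty}(\R)}\le \|h(u)\|_{H^{1}(\R)}$; combining, the triangle inequality yields $|G(u)(t,\xi(t))|\le 2\|h(u)\|_{H^{1}(\R)}\le 3\|h(u)\|_{H^{1}(\R)}$, which is the asserted bound with room to spare.

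\textbf{Main obstacle.} Nothing here is deep; the only point requiring care is the first estimate, where one must combine the right algebraic inequality ($(u\mp u_x)^{2}\ge0$, used with the appropriate sign on each half-line) with the right integration-by-parts identity so that the boundary term produced by $\partial_y(u^{2})$ is exactly $u(x)^{2}$ and the remaining volume integrals cancel. Picking the wrong sign or the wrong weighted square ruins the cancellation, so the bookkeeping across the two half-lines is the step I would write out most carefully.
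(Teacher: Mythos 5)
Your proof is correct and follows essentially the same route as the paper: the split-and-integrate-by-parts argument for $F(u)\ge u^2/2$ is precisely the computation the paper delegates to Escher (pp.~106--107), and your treatment of $G$ is the paper's convolution estimate, except that Young's inequality with $\|\tfrac12 e^{-|\cdot|}\|_{L^1(\R)}=1$ gives you the slightly sharper constant $2$ where the paper settles for $3$. The only point worth making explicit is the appeal to Lemma~\ref{lema2.3} (with $h(0)=0$) to guarantee $h(u)\in H^1(\R)$, which your argument uses implicitly when applying the Sobolev embedding to $h(u)$.
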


\begin{proof}
The proof that $F(u(t))\geq u^2(t)/2$ can be found in \cite{escher}, pages 106 and 107 and, therefore, is omitted. Let us estimate $|G(u(t))|$. We first note that $\|h(u)\|_{L^\infty(\R)}\leq \|h(u)\|_{H^1(\R)}$, $h(0)=0$ and by Lemma \ref{lema2.3}, if $u\in H^1(\R)$, then $h(u)\in H^1(\R)$. Therefore,
$$
|\Lambda^{-2}h(u(t))|=\left|\int_{-\infty}^\infty e^{-|\xi(t)-y|}h(u(t,y))dy\right|\leq\int^\infty_{-\infty}e^{-|\xi(t)-y|}|h(u(t,y))|dy\leq 2\|h(u)\|_{H^1(\R)}.
$$
It is then easy to find the upper bound $|G(u(t))|\leq 3\|h(u(t))\|_{H^1(\R)}$.
\end{proof}

Let 
$$U_0:=\max\{\|u_0\|_{H^1(\R)},\|u_0\|_{H^1(\R)}^4\}.$$ Then $\|h(u)\|_{H^1(\R)}\leq\kappa U_0$, for a certain positive constant $\kappa$ depending on $\al,\,\be,\,\gamma$ and $\Gamma$. Moreover, 
$$
\ba{lcl}
2u^2(t)&=&2\ds{\left(\int_{-\infty}^{\xi(t)}u(t,y)u_x(t,y)dy-\int_{\xi(t)}^\infty u(t,y)u_x(t,y)dy\right)}\\
\\
&\leq&\ds{\int_{-\infty}^{\xi(t)}(u^2(t,y)+u_x^2(t,y))dy+\int^{\infty}_{\xi(t)}(u^2(t,y)+u_x(t,y)^2)dy=\|u(t)\|^2_{H^1(\R)}}.
\ea$$
From the comments above, Proposition \ref{prop4.1} and equation \eqref{5.0.4} we have the following inequality
\bb\label{5.0.7}
y'(t)+\f{y(t)^2}{2}+\lambda y(t)\leq \f{u(t)^2}{2}+|G(u(t))|\leq\f{1}{4}\|u_0\|^2_{H^1(\R)}+3\kappa U_0.
\ee

Suppose that $u_0\in H^3(\R)$ be an initial data of \eqref{1.0.2} such that 
\bb\label{5.0.8}
\theta u'_0(x_0)<\min\{-\|u_0\|_{H^1(\R)}^{1/2},-\|u_0\|_{H^1(\R)}^2\},
\ee
for some constant $\theta>0$ and some point $x_0\in\R$. Since $y(t)=\inf\limits_{x} u_x(t,x)$, then 
$y(0)\leq u_x(0,x_0)=u_0'(x_0)$
and
$$
\theta y(0)\leq\theta u_0'(x_0)<\min\{-\|u_0\|_{H^1(\R)}^{1/2},-\|u_0\|_{H^1(\R)}^2\},
$$
which implies
$$
U_0<\theta^2u_0'(x_0)^2\leq\theta^2y(0)^2.
$$
Suppose that we might be able to find $\epsilon$ such that 
$$
U_0\leq(1-\epsilon)\theta^2 u_0'(x_0)^2\leq(1-\epsilon)\theta^2y(0)^2,
$$
and noting that $\|u_0\|_{H^1(\R)}^2\leq U_0$, we obtain the upper bound 
$$
\f{1}{4}\|u_0\|^2_{H^1(\R)}+3\kappa U_0\leq \left(\f{1}{4}+3\kappa\right)(1-\epsilon)\theta^2y(0)^2.
$$
If we could choose $\theta\leq \sqrt{2/(1+12\kappa)}$, then the inequality \eqref{5.0.7} would read
\bb\label{5.0.9}
y'(t)+\f{y(t)^2}{2}+\lambda y(t)\leq \f{1-\epsilon}{2}y(0)^2.
\ee

We are bound to find conditions for the occurrence of wave breaking. We only need two very technical observations that will make easier the proof of Theorem \ref{teo1.4}.

\begin{remark}\label{rem4.1}
Let $u_0\in H^3(\R)$ be an initial data to the problem \eqref{1.0.2}, $y(0):=\inf\limits_{x} u_x(0,x)$. If $u_0$ satisfy the condition \eqref{5.0.8} for some $\theta>0$, let us define 
\bb\label{5.0.11}
\epsilon_0:=\f{\theta^2 u_0'(x_0)^2-\max\{\|u_0\|_{H^1(\R)},\|u_0\|_{H^1(\R)}^4\}}{\theta^2 u_0'(x_0)^2}.
\ee
Clearly $\epsilon_0\in(0,1)$ and if $\epsilon\in(0,\epsilon_0]$, then $\max\{\|u_0\|_{H^1(\R)},\|u_0\|_{H^1(\R)}^4\}\leq(1-\epsilon)\theta^2 u_0'(x_0)^2$. 
\end{remark}

\begin{remark}\label{rem4.2}
Under the same conditions in Remark \ref{rem4.1}, let
\bb\label{5.0.12}
\lambda_0:=-\f{y(0)}{4}\epsilon_0,
\ee
where $\epsilon_0$ is given by \eqref{5.0.11}. If $\lambda\in(0,\lambda_0)$, then 
    \bb\label{5.0.10}
    \f{\epsilon_0}{4\lambda}+\f{1}{y(0)}>0.
    \ee
\end{remark}

%\begin{proposition}\label{prop4.2}
%Let $u_0\in H^3(\R)$ be an initial data to the problem \eqref{1.0.2}, $y(0):=\inf\limits_{x} u_x(0,x)$. If $u_0$ satisfy the condition \eqref{5.0.8} for some $\theta$, then there exist $\epsilon_0=\epsilon_0(\theta,u_0)>0$ and $\lambda_0=\lambda_0(\theta,u_0)>0$ such that
%\begin{enumerate}
%    \item $\epsilon_0\in(0,1)$ and if $\epsilon\in(0,\epsilon_0]$, then $\max\{\|u_0\|^2_{H^1(\R)},U_0\}\leq(1-\epsilon)\theta^2 u_0'(x_0)^2$.
%    \item If $\lambda\in(0,\lambda_0)$, then 
%    \bb\label{5.0.10}
%    \f{\epsilon_0}{4\lambda}+\f{1}{y(0)}>0.
%    \ee
%\end{enumerate}
%\end{proposition}

%\begin{proof}
%A straightforward calculation shows that 
%\bb\label{5.0.11}
%\epsilon_0:=\f{\theta^2 u_0'(x_0)^2-\max\{\|u_0\|_{H^1(\R)},\|u_0\|^2_{H^1(\R)},\|u_0\|_{H^1(\R)}^3,\|u_0\|_{H^1(\R)}^4\}}{\theta^2 u_0'(x_0)^2}
%\ee
%satisfies the required conditions. For $\lambda$, note firstly that $y(0)<0$ and if \eqref{5.0.10} holds, then $\lambda <\lambda_0$, where
%\bb\label{5.0.12}
%\lambda_0:=-\f{y(0)}{4}\epsilon_0
%\ee
%and $\epsilon_0$ is given by \eqref{5.0.11}.
%\end{proof}

{\bf Proof of Theorem \ref{teo1.4}.}
In view of Corollary \ref{cor4.2} we only need to show that $u_x$ does not have any lower bound.

Under the conditions of the theorem, we note that $y(t)=\inf\limits_{x}u_x(t,x)$ satisfies the inequality \eqref{5.0.9}.

It follows from \cite{escher}, page 108, that $y(t)^2>(1-\epsilon/2)y(0)^2$. This implies that \eqref{5.0.9} can be rewritten as $y'(t)+\lambda y(t)<-\epsilon y(0)^2/4$. Moreover, it is also immediate that $y(0)^2<2y(t)^2/(2-\epsilon)<2y(t)^2$. Taking all of these inequalities into account, substituting them into \eqref{5.0.9} and taking $\epsilon=\epsilon_0/2$, where $\epsilon_0$ is given in Remark \ref{rem4.1}, we obtain
$$
y'(t)+\lambda y(t)\leq-\f{\epsilon_0}{4}y(t)^2.
$$
We observe that the last inequality shows that $y(\cdot)$ is a decreasing function and since $y(0)<0$, then $y(t)<0$ for any $t>0$. Moreover,
$$
\f{d}{dt}\left(\f{1}{e^{\lambda t}y(t)}\right)=-e^{\lambda t}\f{y'+\lambda y}{y^2}\geq\f{\epsilon_0}{4}e^{-\lambda t}
$$ 
and a direct integration followed by simple manipulation yield
$$
e^{\lambda t}\left(\f{\epsilon_0}{4\lambda}+\f{1}{y(0)}\right)\leq\f{\epsilon_0}{4\lambda}+\f{1}{y(t)}\leq\f{\epsilon_0}{4\lambda}.$$

Due to $\lambda<\lambda_0$, we conclude that
$$%\bb\label{5.0.14}
0<e^{\lambda t}\left(\f{\epsilon_0}{4\lambda}+\f{1}{y(0)}\right)\leq\f{\epsilon_0}{4\lambda},
$$%\ee
which forces $t$ to be finite. Therefore, the solution $u$ cannot be defined for all values of $t$, and we then conclude the existence of a finite lifespan $T>0$. Since $m_0\in H^1(\R)$, Theorem \ref{teo4.1} implies that $u_x$ has no lower bound, that is, $\lim\inf\limits_{t\rightarrow T}\left(\inf\limits_{x\in\R}u_x(t,x)\right)=-\infty$.\quad\quad\quad\quad\quad\quad\quad\quad\quad\quad\quad\quad\quad\quad$\square$

\section{Comments on the limitations to ensure global existence of solutions}\label{sec5}

In this section we show the limitations to completely describe the scenario for global existence. 
We begin with a technical result proving the existence of a diffeomorphism.
\begin{proposition}\label{prop4.1}
Let $u\in C^{1}([0,T),H^2(\R))$ be a solution of \eqref{1.0.1}. Then the problem
\bb\label{4.0.1}
\left\{
\ba{l}
q_t(t,x)=u(t,q)+\Gamma,\\
\\
q(0,x)=x
\ea
\right.
\ee
has a unique solution $q(t,x)$ and $q_x(t,x)>0$, for any $(t,x)\in[0,T)\times\R$. Moreover, $q(t,\cdot)$ is an increasing diffeomorphism of the line.
\end{proposition}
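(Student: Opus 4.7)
The plan is to treat \eqref{4.0.1} as a parameter-dependent ODE in $t$, with $x$ playing the role of a parameter, and to invoke classical ODE theory on the vector field $(t,q)\mapsto u(t,q)+\Gamma$. The smoothness hypothesis $u\in C^1([0,T),H^2(\R))$ combined with the Sobolev Embedding Theorem (Lemma \ref{lema2.2}) yields $u(t,\cdot)\in C^1_0(\R)$ for every $t\in[0,T)$, with $\|u(t,\cdot)\|_{L^\infty(\R)}$ and $\|u_x(t,\cdot)\|_{L^\infty(\R)}$ locally bounded in $t$. In particular the right-hand side of \eqref{4.0.1} is continuous in $t$ and locally Lipschitz in $q$, so the Cauchy–Lipschitz theorem gives a unique maximal $C^1$ solution $q(t,x)$.

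Next I would argue that this maximal solution extends to the whole interval $[0,T)$. Since $|u(t,q)+\Gamma|\le\|u(t,\cdot)\|_{L^\infty(\R)}+|\Gamma|$ is bounded on every compact subinterval $[0,T']\subset[0,T)$, integration of \eqref{4.0.1} yields an estimate of the form $|q(t,x)-x|\le C(T')$ uniformly in $x$, which rules out any finite-time blow-up of $q$ within $[0,T)$. Hence $q$ is defined on $[0,T)\times\R$.

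For the positivity of $q_x$, I would differentiate \eqref{4.0.1} formally with respect to $x$ (justification by the usual smooth-dependence-on-parameters theorem applied to a $C^1$ vector field) to obtain the linear ODE
\begin{equation*}
\frac{d}{dt}q_x(t,x)=u_x(t,q(t,x))\,q_x(t,x),\qquad q_x(0,x)=1,
\end{equation*}
whose explicit solution is
\begin{equation*}
q_x(t,x)=\exp\!\left(\int_0^t u_x(s,q(s,x))\,ds\right)>0.
\end{equation*}
Because $\|u_x(s,\cdot)\|_{L^\infty(\R)}$ is locally bounded in $s$, this exponential is finite for every $(t,x)\in[0,T)\times\R$.

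Finally, $q_x>0$ forces $q(t,\cdot)$ to be strictly increasing and $C^1$; the uniform-in-$x$ estimate $|q(t,x)-x|\le C(T')$ from above shows that $q(t,x)\to\pm\infty$ as $x\to\pm\infty$, so $q(t,\cdot):\R\to\R$ is a continuous strictly increasing bijection. Smoothness of the inverse follows from the inverse function theorem applied pointwise, the hypothesis $q_x>0$ playing the role of the non-vanishing derivative. I expect the main obstacle to be the global-in-$x$ extension of the ODE solution and the surjectivity of $q(t,\cdot)$; both of these reduce to controlling $\|u(t,\cdot)\|_{L^\infty(\R)}$, which is supplied gratis by the Sobolev embedding $H^2(\R)\hookrightarrow L^\infty(\R)$ and the assumption $u\in C^1([0,T),H^2(\R))$.
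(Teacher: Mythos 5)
Your proof is correct and follows essentially the same route as the paper: existence and uniqueness come from classical ODE theory using $u\in C^{1}([0,T),H^2(\R))$ together with the Sobolev embedding $H^2(\R)\subset C^1(\R)$, and positivity of $q_x$ comes from the same explicit exponential formula $q_x(t,x)=\exp\left(\int_0^t u_x(s,q(s,x))\,ds\right)$. The only difference is that you prove the diffeomorphism property directly (via the uniform estimate $|q(t,x)-x|\le C(T')$ and the inverse function theorem), whereas the paper delegates that step to a citation of Constantin's Theorem 3.1, which rests on the same argument.
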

\begin{proof}
We prove that the system has a unique solution $q(t,x)$ and that $q_x(t,x)>0$, for any $(t,x)\in[0,T)\times\R$. The proof that this function is a diffeomorphism is the same as for the CH equation and it can be found in \cite{const2000-1}, Theorem 3.1.

Since $u\in C^1([0,T)H^2(\R))$ and $H^2(\R)\subset C^1(\R)$, then $u\in C^{1}([0,T)\times\R,\R)$, which means that problem \eqref{4.0.1} has a unique solution. If we differentiate \eqref{4.0.1} with respect to $x$, and noticing that $\p_x u(t,q)=u_{x}q_x$, we have another IVP, given by
\bb\label{4.0.2}
\left\{
\ba{l}
\ds{\p_tq_x(t,x)=u_x(t,q)q_x(t,y)},\\
\\
q_x(0,x)=1
\ea
\right.
\ee
The solution of \eqref{4.0.2} is
$$
q_x(t,x)=\exp\left(\int_0^tu_x(s,q(s,x))ds\right),
$$
which completes the proof.
\end{proof}

We note that Theorem \ref{teo4.1} describes a sufficiency condition for the global existence of solutions, but it does not give us any information about whether this condition is satisfied.

\begin{theorem}\label{teo5.1}
Let $u_0\in H^s(\R)$, $s>3/2$, and $m_0(x)=u_0(x)-u_0''(x)$. Assume that:
\begin{enumerate}
    \item there exists a point $x_0\in\R$ such that $m_0(x)\leq0$, if $x\in(-\infty,x_0]$, and $m_0(x)\geq0$, if $x\in[x_0,\infty)$;
    \item $\sign{(m_0)}=\sign{(m)}$.
\end{enumerate}
Then the solution $u$ of \eqref{1.0.1} possesses bounded from below $x-$derivative.
\end{theorem}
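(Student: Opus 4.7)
The strategy is to exploit the convolution representation $u=\Lambda^{-2}m=g\ast m$, with $g(y)=e^{-|y|}/2$, in order to translate the sign pattern of $m$ (inherited from $m_0$ through hypothesis (2)) into a one-sided bound on $u_x$. First, starting from
\[
u(t,x)=\f{1}{2}\int_{\R} e^{-|x-y|}m(t,y)\,dy,
\]
a direct differentiation (the two boundary contributions at $y=x$ cancelling) yields the two standard identities
\[
(u+u_x)(t,x)=e^{x}\int_{x}^{+\infty}e^{-y}m(t,y)\,dy,\qquad (u-u_x)(t,x)=e^{-x}\int_{-\infty}^{x}e^{y}m(t,y)\,dy.
\]
These are the workhorses for this class of pointwise arguments on CH-type equations.

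Next, I invoke hypotheses (1) and (2) to infer that, for every $t$ in the existence interval, $m(t,y)\leq 0$ whenever $y\leq x_0$ and $m(t,y)\geq 0$ whenever $y\geq x_0$. Consider $x\geq x_0$: the integrand of $(u+u_x)(t,x)$ is nonnegative on $[x,+\infty)\subseteq[x_0,+\infty)$, so $(u+u_x)(t,x)\geq 0$ and hence $u_x(t,x)\geq -u(t,x)\geq -|u(t,x)|$. Symmetrically, for $x\leq x_0$ the integrand of $(u-u_x)(t,x)$ is nonpositive on $(-\infty,x]\subseteq(-\infty,x_0]$, giving $(u-u_x)(t,x)\leq 0$ and hence $u_x(t,x)\geq u(t,x)\geq -|u(t,x)|$. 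In either case
\[
u_x(t,x)\;\geq\;-\|u(t,\cdot)\|_{L^\infty(\R)}\;\geq\;-C\,\|u(t,\cdot)\|_{H^1(\R)}
\]
by the $1$D Sobolev embedding (Lemma \ref{lema2.2}). Theorem \ref{teo1.2} then bounds the right-hand side by $-C\,e^{-\lambda t}\|u_0\|_{H^1(\R)}$, which is finite on any interval of existence and uniform in $t$ for $\lambda\geq 0$. This yields a constant $k>0$ such that $u_x(t,x)>-k$ everywhere, which is precisely the hypothesis of Theorem \ref{teo4.1}, so the solution extends globally.

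The step I expect to require the most care is fixing the exact meaning of hypothesis (2). Read literally, it pins the transition point to the fixed abscissa $x_0$ for all time; however, the more natural dynamical reading is that the sign structure is transported by the characteristic flow $q(\cdot,\cdot)$ constructed in the proposition just above, i.e.\ the transition point at time $t$ is $\xi_0(t):=q(t,x_0)$. The argument above is unchanged under this alternative interpretation: one simply replaces $x_0$ by the smooth strictly increasing function $\xi_0(t)$ in the splitting, and the two integral identities still deliver $u_x\geq u$ on $\{x\leq \xi_0(t)\}$ and $u_x\geq -u$ on $\{x\geq \xi_0(t)\}$, hence $u_x\geq -|u|$ pointwise, as required.
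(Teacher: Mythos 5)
Your proof is correct and follows essentially the same route as the paper: the same convolution identities for $u\pm u_x$, the same splitting at the transported transition point $q(t,x_0)$, and the same conclusion $u_x\geq -\|u\|_{L^\infty(\R)}\geq -\|u_0\|_{H^1(\R)}$. The only caveat is your closing claim that the solution extends globally, which goes beyond the stated conclusion and, to invoke Theorem \ref{teo4.1}, additionally requires $m_0\in H^1(\R)$ --- a point the paper defers to a separate remark after the proof.
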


\begin{proof}
Since $u=p\ast m$, where $\ast$ denotes the convolution and $p=e^{-|x|}/2$, we have
$$
u(t,x)=\f{1}{2}\int_\R e^{-|x-\xi|}m(t,\xi)d\xi=\f{1}{2}e^{-x}\int_{-\infty}^xe^\xi m(t,\xi)d\xi+\f{1}{2}e^{x}\int^{\infty}_xe^{-\xi} m(t,\xi)d\xi.
$$
Differentiating this representation of $u$ with respect to $x$ gives
$$
u_x(t,x)=-\f{1}{2}e^{-x}\int_{-\infty}^xe^\xi m(t,\xi)d\xi+\f{1}{2}e^{x}\int^{\infty}_xe^{-\xi} m(t,\xi)d\xi.
$$
Since $\sign{m}=\sign{m_0}$, then $m(t,x)\leq 0$ if $ x\leq q(t,x_0)$ and $m(t,x)\geq0$ if $x\geq q(t,x_0)$, where $q$ is the function in Proposition \ref{prop4.1}. Therefore,
$$
\ba{lcl}
u_x(t,x)&=&\ds{-\f{1}{2}e^{-x}\int_{-\infty}^x e^\xi m(t,\xi)d\xi-\f{1}{2}e^{x}\int_x^\infty e^{-\xi}m(t,\xi)d\xi}\\
\\
&&\ds{+\f{1}{2}e^{x}\int_x^\infty e^{-\xi}m(t,\xi)d\xi+\f{1}{2}e^{x}\int^{\infty}_xe^{-\xi} m(t,\xi)d\xi}\\
\\
&=&\ds{-u(t,x)+e^{x}\int^{\infty}_xe^{-\xi} m(t,\xi)d\xi}.
\ea
$$
As a consequence, if $x\geq q(t,x_0)$, then $u_x(t,x)\geq -u(t,x)$. On the other hand, a similar calculation reads
$$
u_x(t,x)=u(t,x)-e^{-x}\int_{-\infty}^xe^{\xi} m(t,\xi)d\xi,
$$
which implies that $u_x(t,x)\geq u$, provided that $x\leq q(t,x_0)$.

These two facts are enough to assure that $u_x(t,x)\geq -\|u\|_{L^\infty(\R)}$. Since $\|u\|_{L^\infty(\R)}\leq \|u\|_{H^1(\R)}\leq \|u_0\|_{H^1(\R)}$, we conclude that $- \|u_0\|_{H^1(\R)}\leq u_x(t,x)$.
\end{proof}

We observe that if the conditions of Theorem \ref{teo5.1} are satisfied and if $m_0\in H^1(\R)$, then $u$ does not blow up in finite time in view of Theorem \ref{teo4.1}.

\begin{theorem}\label{teo5.2}
Let $u$ be a solution of \eqref{1.0.1} with initial data $u(0,x)=u_{0}(x)$, $m_0(x):=u_0-u_0''$. Then
\bb\label{4.0.4}
m(t,q(t,x))\,q_x^2(t,x)=m_0(x)e^{-\lambda t}+\int_{0}^t e^{\lambda(s-t)}\,q_x^2(s,x)\p_x h(u(s,q))ds,
\ee
where $q$ is the solution of the problem \eqref{4.0.1} and
$h(u)$ is the function given in \eqref{2.3.3}.
\end{theorem}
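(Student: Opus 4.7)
The plan is to run an integrating-factor argument along the characteristics $q(t,x)$ provided by Proposition 5.1 (labelled \ref{prop4.1} in the excerpt). First I would put the equation for $m$ into a shape that exposes $\partial_x h(u)$ on the right-hand side. Starting from \eqref{1.0.2} and using $u_{xxx}=u_x-m_x$, the right-hand side becomes $(\alpha+\Gamma)u_x+\beta u^2u_x+\gamma u^3 u_x-\Gamma m_x=\partial_x h(u)-\Gamma m_x$, with $h$ given by \eqref{2.3.3}. Therefore the evolution equation for $m$ can be rewritten as
\begin{equation*}
m_t+(u+\Gamma)m_x+2u_x m+\lambda m=\partial_x h(u).
\end{equation*}

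Next I would interpret the first two terms as the material derivative along $q$. Since $q_t(t,x)=u(t,q)+\Gamma$, the chain rule gives $\frac{d}{dt}m(t,q(t,x))=m_t(t,q)+(u(t,q)+\Gamma)m_x(t,q)$, so the reformulated equation reads, along each characteristic,
\begin{equation*}
\frac{d}{dt}m(t,q(t,x))+\bigl(2u_x(t,q)+\lambda\bigr)\,m(t,q)=\partial_x h(u(t,q)).
\end{equation*}
At the same time, differentiating \eqref{4.0.1} in $x$ yields \eqref{4.0.2}, from which $\frac{d}{dt}q_x^2(t,x)=2u_x(t,q)\,q_x^2(t,x)$. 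Consequently
\begin{equation*}
\frac{d}{dt}\bigl(e^{\lambda t}q_x^2(t,x)\bigr)=\bigl(2u_x(t,q)+\lambda\bigr)e^{\lambda t}q_x^2(t,x),
\end{equation*}
which identifies $e^{\lambda t}q_x^2$ as the correct integrating factor.

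Combining the two displays above, a direct application of the product rule gives
\begin{equation*}
\frac{d}{dt}\bigl(e^{\lambda t}\,m(t,q(t,x))\,q_x^2(t,x)\bigr)=e^{\lambda t}\,q_x^2(t,x)\,\partial_x h(u(t,q(t,x))).
\end{equation*}
Integrating from $0$ to $t$, using $q(0,x)=x$ and $q_x(0,x)=1$ from \eqref{4.0.1}--\eqref{4.0.2}, so that $m(0,q(0,x))q_x^2(0,x)=m_0(x)$, and then dividing through by $e^{\lambda t}$ produces exactly \eqref{4.0.4}.

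The argument is essentially bookkeeping once the first step is in place, and there is no serious analytic obstacle: all regularity required for the chain rule and the fundamental theorem of calculus is supplied by $u\in C^1([0,T),H^2(\R))$ together with the Sobolev embedding, and the existence and smoothness of $q$ come from Proposition \ref{prop4.1}. The only mildly delicate point is the algebraic rewriting of the right-hand side as $\partial_x h(u)-\Gamma m_x$, which is what allows the $\Gamma u_{xxx}$ term to be absorbed into the transport operator $(u+\Gamma)\partial_x$ rather than treated as a source; the rest is a standard integrating-factor computation tailored to the weight $e^{\lambda t}q_x^2$.
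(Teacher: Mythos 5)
Your proposal is correct and follows essentially the same route as the paper: both differentiate $m(t,q)\,q_x^2$ along the characteristics of \eqref{4.0.1}, use \eqref{4.0.7} and \eqref{4.0.2} to reduce it to the linear ODE $\frac{d}{dt}(mq_x^2)+\lambda mq_x^2=q_x^2\,\partial_x h(u)$, and solve it with the integrating factor $e^{\lambda t}$. Your version merely makes explicit two steps the paper leaves implicit, namely the algebraic rewriting that produces $\partial_x h(u)-\Gamma m_x$ and the verification that $e^{\lambda t}q_x^2$ is the right weight.
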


\begin{proof}

Differentiating $m(t,q(t,x)) q_x^2(t,x)$ with respect to $t$ and using \eqref{4.0.7}, we conclude that 
\bb\label{4.0.5}
\f{d}{dt}(m q_x^2)=\left[m_t+(u+\Gamma)m_x+2u_x m\right]q_x^2=-\lambda m q_x^2+\p_xh(u)q_x^2,
\ee
which is a linear ODE to $m q_x^2$. Integrating \eqref{4.0.5} and taking \eqref{4.0.1} and \eqref{4.0.2} into account we conclude that its solution is \eqref{4.0.4}. 
\end{proof}

\begin{remark}\label{rem5.1}
In Theorem \ref{teo5.2} we observe how the presence of the function $h(u)$ affects the investigation of global existence of solutions to the Cauchy problem \eqref{1.0.2}. If $h(u)=0$, from \eqref{4.0.4} we would then immediately conclude that $\sign{(m)}=\sign{(m_0)}$ and the second condition in Theorem \ref{teo5.1} would be a consequence of the first.
\end{remark}

\begin{remark}\label{rem5.2} From Theorem \ref{teo5.1} we see that the condition $\sign{(m_0)}=\sign{(m)}$ is essential, at least in the venue we followed, to prove the global existence of solutions. However, we are unable to describe completely whether such condition is satisfied, which is an open problem in the study global solvability of the equation \eqref{1.0.2}. 

A possible direction to have a complete description of the existence of global solutions is the following: Let $h(u)$ be the function given by \eqref{2.3.3}. From \eqref{4.0.10} we have
\bb\label{6.1}
m(t,q(t,x))\,q_x^2(t,x)=e^{-\lambda t}\left(m_0(x)+\int_{0}^t e^{\lambda\,s}\,q_x^2(s,x)\p_x h(u(s,q))ds\right).
\ee

If we could determine a function $\xi=\xi(x)\geq0$ such that 
$$\left|\int_{0}^t e^{\lambda(s)}\,q_x^2(s,x)\p_x h(u(s,q))ds\right|\leq \xi(x)$$
and if $|m_0(x)|\geq\xi(x)$, it would then follow from \eqref{6.1} that $\sign{(m)}=\sign{(m_0)}$. Unfortunately we have not succeed to find such a function for arbitrary values of the parameters $\al,\,\be,\,\gamma,$ and $\Gamma$ in \eqref{2.3.3}. 
\end{remark}

In line with the remarks above, we have the following corollary.
\begin{corollary}\label{cor5.1}
If $h(u)\equiv0$, $u_0\in H^s(\R)$, $s>3/2$, and the first condition in Theorem \ref{teo5.1} holds, then the solutions of the Cauchy problem \eqref{1.0.2} exists globally.
\end{corollary}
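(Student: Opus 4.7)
The plan is to specialize Theorem \ref{teo5.2} to the case $h(u)\equiv 0$, use it to verify the sign-transport hypothesis $\sign(m)=\sign(m_0)$ of Theorem \ref{teo5.1}, and then close the argument with Corollary \ref{cor4.2} together with the local well-posedness blow-up alternative.

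First I would observe that setting $h\equiv 0$ annihilates the Duhamel-type integral on the right-hand side of \eqref{4.0.4}, so Theorem \ref{teo5.2} collapses to the clean identity
$$m(t,q(t,x))\,q_x^{2}(t,x)=e^{-\lambda t}\,m_0(x),$$
valid on the maximal interval $[0,T)\times\R$, where $q$ is the flow from Proposition \ref{prop4.1}. Since Proposition \ref{prop4.1} guarantees $q_x(t,x)>0$ and the exponential factor is strictly positive, this identity forces $\sign(m(t,q(t,x)))=\sign(m_0(x))$ for every $(t,x)\in[0,T)\times\R$; this is exactly the sign preservation along characteristics that is used in the proof of Theorem \ref{teo5.1}.

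Next I would combine this with the first hypothesis on $m_0$ and the fact that $q(t,\cdot)$ is an increasing diffeomorphism of $\R$: writing $y=q(t,x)$ with $x=q^{-1}(t,y)$, monotonicity yields $x\leq x_0\iff y\leq q(t,x_0)$, so $m(t,y)\leq 0$ for $y\leq q(t,x_0)$ and $m(t,y)\geq 0$ for $y\geq q(t,x_0)$. At this point both hypotheses of Theorem \ref{teo5.1} are verified, so that theorem applies and furnishes the uniform one-sided bound $u_x(t,x)\geq -\|u_0\|_{H^1(\R)}$ throughout $[0,T)\times\R$.

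To finish I would feed this lower bound into Corollary \ref{cor4.2} with any $k>\|u_0\|_{H^1(\R)}$, obtaining $\|u(t,\cdot)\|_{H^s(\R)}\leq \sigma(t)\|u_0\|_{H^s(\R)}$ with $\sigma\in C^1(\R)$. Since $\sigma$ remains finite on every bounded subinterval of $[0,\infty)$, the $H^s$-norm cannot diverge at any finite time, and the blow-up alternative built into the local well-posedness result (Theorem \ref{teo1.1}) rules out $T<\infty$. Hence $T=\infty$ and the solution is global. The only genuinely non-routine step is the sign-transport argument, which collapses precisely because $h\equiv 0$ kills the integral in \eqref{4.0.4}; for arbitrary $h$ this is exactly the obstruction flagged in Remark \ref{rem5.2}, and it is the reason the general case remains open.
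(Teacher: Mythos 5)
Your proposal is correct and follows essentially the same route as the paper: kill the integral in \eqref{4.0.4} using $h\equiv0$ to deduce $\sign(m)=\sign(m_0)$ from the positivity of $q_x$ and $e^{-\lambda t}$, then invoke Theorem \ref{teo5.1} for the lower bound on $u_x$ and Theorem \ref{teo4.1} (via Corollary \ref{cor4.2}) to exclude finite-time blow-up. The only cosmetic difference is that you spell out the transport of the sign condition along the flow and appeal to Corollary \ref{cor4.2} rather than Theorem \ref{teo4.1} directly, which is if anything slightly more careful for general $s>3/2$.
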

\begin{proof}
It follows from \eqref{6.1} that $\sign{(m)}=\sign{(m_0)}$ and the conclusion is a consequence of theorems \ref{teo4.1} and \ref{teo5.1}.
\end{proof}

Corollary \ref{cor5.1} recover global existence results proved in \cite{novjmp2013,novjde2016,wujmp2006,wujde2009,zhangjmp2015} regarding weakly dissipative Camassa-Holm and Dullin-Gottwald-Holm equations.

\section{Discussion}\label{sec6}

Very recently \cite{chines-adv,chines-arxiv,gui-jnl,chines-jde}, an equation of the type \eqref{1.0.1} was deduced as a model for shallow water waves with Coriolis effect. The mentioned equation has its coefficients depending on physical parameters related to the rotation of the Earth. On the other hand, in \cite{silvajde2019} we considered \eqref{1.0.1} with $\lambda=0$ and we investigated it from a complementary point of view (taking the results in \cite{chines-adv,chines-arxiv,gui-jnl,chines-jde} into account). This equation revealed to be mathematically very rich, as one can see by the multitude of travelling waves \cite{silvajde2019,gui-jnl} it possesses, local well-posedness \cite{chines-adv,silvajde2019} and wave breaking \cite{silva2019}%e investigated the global existence of solutions of \eqref{1.0.3} and we conclude that it may be globally well posed if $u_x$ is bounded from below. We also determined conditions for the appearance of wave breaking. These results follow the same strategy we used in the present work.

The main difference of our results and those established in \cite{silvajde2019,silva2019} is the presence of the term $\lambda m$ in \eqref{1.0.1} or $\lambda u$ in \eqref{2.3.4}. From a different perspective, we can also argue that the main difference between the problem we treated here with those treated in \cite{niujmaa2011,novjmp2013,novjde2016,zhangjmp2015,wujmp2006,wujde2009} is the presence of the cubic and quartic nonlinearities in \eqref{1.0.1}.

In some parts of the text we mentioned {\it dissipation}. Let us explain the term and why this is our case. Let $u_0$ be an initial data of \eqref{1.0.2} such that $0\neq u_0\in H^s(\R)$, with $s>3/2$, and $\lambda>0$. From Theorem \ref{teo1.2} we have the energy
$
{\cal H}_1(t)=e^{-2\lambda t}{\cal H}_1(0),
$
and ${\cal H}_1(0)=\|u_0\|_{H^1(\R)}^2/2>0$. Then we can easily infer
\bb\label{6.0.2}
\f{d {\cal H}_1(t)}{dt}=-2\lambda {\cal H}_1(t)=-2\lambda e^{-2\lambda t}{\cal H}_1(0)<0.
\ee

From \eqref{6.0.2} we observe that ${\cal H}_1(t)$ decreases with time, which means that the energy is not conserved along time, or better, we have loss, or {\it dissipation}, of energy. Moreover, we observe that the energy of the solution is a monotonic decreasing function of $t$.

The presence of the function $h(u)$ in \eqref{2.3.4} or, more precisely, the cubic and quartic nonlinearities in \eqref{1.0.1}, brings some complexity to the problem when compared with similar results of CH and DGH equations. For example, the condition we found for the existence of wave breaking are affected by the values of the parameters, as one can see by the range of $\theta$ in the condition \eqref{5.0.8} given by Theorem \ref{teo1.4}. Moreover, the value of $\epsilon_0$ also depends on powers of the norm $\|u_0\|_{H^1(\R)}$, see \eqref{5.0.11}, as well as the upper bound to $\lambda$, as shown in \eqref{5.0.12}. If $\|u_0\|_{H^1(\R)}\leq1$ then \eqref{5.0.11} and \eqref{5.0.12} reduce to
$$
\epsilon_0=\f{\theta^2 u'_0(x_0)^2-\|u_0\|_{H^1(\R)}}{\theta^2 u_0'(x_0)^2}\quad\text{and}\quad \lambda_0=-\f{y(0)}{4}\f{\theta^2 u'_0(x_0)^2-\|u_0\|_{H^1(\R)}}{\theta^2 u_0'(x_0)^2},
$$
where $y(0)=\inf\limits_{x}u_x(0,x)$ and the possible values for $\theta$ are given in Theorem \ref{teo1.4}. However, in case $\|u_0\|_{H^1(\R)}>1$ and $\gamma\neq0$, then these constants changes to
$$
\epsilon_0=\f{\theta^2 u'_0(x_0)^2-\|u_0\|^4_{H^1(\R)}}{\theta^2 u_0'(x_0)^2}\quad\text{and}\quad \lambda_0=-\f{y(0)}{4}\f{\theta^2 u'_0(x_0)^2-\|u_0\|^4_{H^1(\R)}}{\theta^2 u_0'(x_0)^2},
$$
evidencing how the higher order nonlinearities affects these parameters and the wave breaking as well.

The wave breaking phenomena of the solutions of \eqref{1.0.2} is assured by Theorem \ref{teo1.4} provided that $\lambda\in(0,\lambda_0)$. We would like to point out the following comments about the parameter $\lambda$:
\begin{enumerate}
    \item The presence of the term $\lambda\, m$ in \eqref{1.0.2}, with $\lambda>0$, is enough to guarantee the existence of time-decaying solutions to the equation with sufficient regularity, no matter the value of $\lambda$, as can infer from equation \eqref{6.0.2}. In particular, larger values of $\lambda$ imply a faster decaying of energy than the small ones.
    \item Although larger values of $\lambda$ result into fast decaying of the energy and, consequently, the fast vanishing of the solutions, they cannot guarantee the existence of wave breaking of the solutions. Actually, our results, namely Theorem \ref{teo1.4}, can only assure the appearance of wave breaking under restrictive conditions, which among then, small values of $\lambda$. By small values of $\lambda$ we mean those smaller than $\lambda_0$, see \eqref{5.0.12}.
    \item Theorem \ref{teo1.4} does not give any information to us if wave breaking may or not occur if $\lambda\geq\lambda_0$. Actually, this is an open question.
    \item In line with the previous comments, only very small values of $\lambda$ surely allow the wave breaking of solutions of \eqref{1.0.2}. We observe that for small values of $\lambda$, the term $\lambda\,m$ in \eqref{1.0.2} can be interpreted as a perturbation in the equation in \eqref{1.0.3}.  This perturbation, no matter how small it is, is enough to bring the vanishing of the solutions with enough regularity of the equation.
    \item The conditions for wave breaking phenomena for equation \eqref{1.0.3} were investigated in \cite{silva2019}. We note that if $\lambda$ is small (in the sense mentioned above), the conditions for wave breaking of \eqref{1.0.2} are essentially unaltered when compared to those for \eqref{1.0.3}, see \cite{silva2019}.
\end{enumerate}
\section{Conclusion}\label{sec7}

In the present work we investigated equation \eqref{1.0.1} and, with more emphasis, its corresponding Cauchy problem \eqref{1.0.2}. 

Our main results can be summarised as follows:
\begin{enumerate}
    \item We prove the local well posedness to the Cauchy problem \eqref{1.0.2}, see Theorem \ref{teo1.1}.
    \item We established conservation laws and conserved quantities for the equation and the problem, see Theorem \ref{teo1.2}. In particular, we proved that the solutions of \eqref{1.0.2} are bounded from above by the Sobolev norm of the initial data.
    \item We also obtain sufficient condition for the appearance of wave breaking of the solutions of \eqref{1.0.2}, see Theorem \ref{teo1.4}.
\end{enumerate}

\section*{Acknowledgements}

The idea of this paper occurred during the period the  I was as a visiting professor at Silesian University in Opava, Czech Republic, where this research begun. I would like to express my deeply gratitude to the Mathematical Institute of the Silesian University in Opava for the warm hospitality and very nice work atmosphere I found there. Particular thanks are given to Professor A. Sergyeyev and Professor R. Popovych for the stimulating discussions we had during my visit. I would also like to thank Dr. P. L. da Silva for all discussions regarding the subject of this paper and for her firm and continuous encouragement. Last, but not least, I want to express my deepest gratitude to Prof. A. Sergyeyev, who was a perfect host during my stay in Opava.

I am also thankful to CNPq (grants 308516/2016-8 and 404912/2016-8) for financial support.


\begin{thebibliography}{10}

\bibitem{anco} S. Anco, P. L. da Silva and I. L. Freire, \newblock A family of wave-breaking equations generalizing the Camassa-Holm and Novikov equations, \newblock \emph{J. Math. Phys.}, vol. 56, paper 091506, (2015).


\bibitem{brezis} H. Brezis, Functional Analysis, Sobolev Spaces and Partial Differential Equations, Springer, (2011).


\bibitem{boz}Y. Bozhkov, I. L. Freire and N. Ibragimov, Group analysis of the Novikov equation, Comp. Appl. Math., v 33, 193--202, 2014. 


\bibitem{chprl} R. Camassa, D.D. Holm, An integrable shallow water equation with peaked solitons, Phys. Rev. Lett., vol. 71, 1661--1664, (1993).



\bibitem{chines-adv} R. M. Chen, G. Gui and Y. Liu, On a shallow-water approximation to the Green--Naghdi equations with the Coriolis effect, Adv. Math., vol. 340, 106--137, (2018).  



\bibitem{const1998-1} A. Constantin, J. Escher, Global existence and blow-up for a shallow water equation, Annali Sc. Norm. Sup. Pisa, vol. 26, 303--328, (1998).


\bibitem{const1998-2} A. Constantin and J. Escher, Wave breaking for nonlinear nonlocal shallow water equations, Acta Math., vol. 181, 229--243 (1998).

\bibitem{const1998-3} A. Constantin and J. Escher, Well-Posedness, Global Existence, and Blowup Phenomena, for a Periodic Quasi-Linear Hyperbolic Equation, Commun. Pure App. Math., Vol. LI, 0475--0504 (1998).

\bibitem{const2000-1} A. Constantin, Existence of permanent and breaking waves for a shallow water equation: a geometric approach, Ann. Inst. Fourier, vol. 50, 321-362, (2000).

\bibitem{const-mol}A. Constantin, L. Molinet, The initial value problem for a generalized Boussinesq equation, Differential Integral Equations, vol. 15, 1061--1072,  (2002).

\bibitem{pri-book} P. L. da Silva and I. L. Freire, On the group analysis of a modified Novikov equation, in Interdisciplinary Topics in Applied Mathematics, Modeling and Computational Science, Springer Proceedings in Mathematics $\&$ Statistics 117, (2015), DOI 10.1007/978-3-319-12307-3$\_$23.

\bibitem{pri-aims} P. L. da Silva and I. L. Freire, An equation unifying both Camassa-Holm and Novikov equations, Discreted and Continuous Dynamical Systems, 304--311, (2015), DOI: 10.3934/proc.2015.0304.

\bibitem{raspa} P. L. da Silva, Classification of bounded travelling wave solutions for the Dullin--Gottwald--Holm equation, J. Math. Anal. Appl., vol. 471, 481--488, (2019), doi: 10.1016/j.jmaa.2018.10.086.

\bibitem{silvajde2019} P. L. da Silva and I. L. Freire, Well-posedness, travelling waves and geometrical aspects of generalizations of the Camassa-Holm equation, J. Diff. Equ., vol. 267, 5318--5369, (2019).

\bibitem{silva2019} P. L. da Silva and I. L. Freire, Integrability, existence of global solutions and blow-up criteria for a generalization of the Camassa-Holm equation, arXiv:1906.00304, (2019).


\bibitem{depro}A. Degasperis and M. Procesi, Asymptotic integrability, in: Symmetry and Perturbation Theory, World Scientific, 23--37, (1999). 

\bibitem{DHH} A. Degasperis, D. D. Holm and A. N. W. Hone, A new integrable equation with peakon solutions, Theor. Math. Phys., 133, 1463--1474, (2002), DOI:10.1016/S0031-8914(53)80099-6.

\bibitem{dgh} H. Dullin, G. Gottwald, D. Holm, An integrable shallow water equation with linear and nonlinear dispersion, Phys. Rev. Lett., 87, Article 194501, (2001).



\bibitem{escher} J. Escher, Breaking water waves, In: Constantin A. (eds) Nonlinear Water Waves. Lecture Notes in Mathematics, vol 2158. Springer, Cham, (2016), DOI: 10.1007/978-3-319-31462-4$\_$2.

\bibitem{fokas} A. S. Fokas and B. Fuchssteiner, Symplectic structures, their Bäcklund transformations and hereditary symmetries, Phys. D., vol. 4, 47--66, (1981).

\bibitem{folland} G. B. Folland, Introduction to partial differential equations, 2nd edition, (1995).

\bibitem{chines-arxiv} G. Gui, Y. Liu and J. Sun, A nonlocal shallow-water model arising from the full water waves with the Coriolis effect, arXiv:1801.04665 (2018).

\bibitem{gui-jnl} G. Gui, Y. Liu and T. Luo, Model equations and traveling wave solutions for shallow-water waves with the Coriolis effect, J. Nonlin. Sci., (2018), DOI: doi.org/10.1007/s00332-018-9510-x.



\bibitem{hone} A. N. W. Hone and J. P. Wang, Integrable peakon equations with cubic nonlinearity, J. Phys. A: Math. Theor., 41, 372002, (2008).

\bibitem{hunter} J. K. Hunter, B. Nachtergaele, \textit{Applied analysis}. Singapore, World Scientific, (2005).


\bibitem{john} R. S. Johnson, Camassa–Holm, Korteweg–de Vries and related models for water waves, J. Fluid. Mech., 63--82, vol. 455, (2002).

\bibitem{kato} T. Kato, Quasi-linear equations of evolution, with applications to partial differential equations. in:
Spectral theory and diOerential equations, Proceedings of the Symposium Dundee, 1974, dedicated to
Konrad Jrgens, Lecture Notes in Math, Vol. 448, Springer, Berlin, 1975, pp. 25–70.

\bibitem{lenjde2013} J. Lenells and M. Wunsch, On the weakly dissipative Camassa--Holm,
Degasperis--Procesi, and Novikov equations, J. Diff. Equ., vol. 255, 441-448, (2013).

\bibitem{linares} F. Linares and G. Ponce, Introduction to Nonlinear Dispersive Equations, Springer, (2015).

\bibitem{liu2011} X. Liu and Z. Yin, Local well-posedness and stability of peakons for a generalized Dullin--Gottwald--Holm equation, Nonlin. Anal., vol. 74, 2497--2507, (2011).



\bibitem{mustafa} O. G. Mustafa, On the Cauchy problem for a generalized
Camassa--Holm equation, Nonlin. Anal., vol. 64, 1382--1399, (2006).

\bibitem{niujmaa2011} W. Niu and S. Zhang, Blow-up phenomena and global existence for the nonuniform weakly
dissipative $b-$equation, J. Math. Anal. Appl., vol. 374, 166--177, (2011).

\bibitem{novikov}  V. Novikov, Generalizations of the Camassa–Holm equation, J. Phys. A: Math. Theor., 42, 342002, (2009).

\bibitem{novjmp2013} E. Novruzov, Blow-up phenomena for the weakly dissipative Dullin--Gottwald--Holm equation, J. Math. Phys. 54, 092703 (2013).

\bibitem{novjde2016} E. Novruzov, Blow-up of solutions for the dissipative Dullin-Gottwald-Holm equation with arbitrary coefficients, J. Diff. Equ., vol. 261, 1115--1127, (2016).




\bibitem{chines-jde} X. Tu, Y. Liu, C. Mu, Existence and uniqueness of the global conservative weak solutions to the rotation-Camassa–Holm equation, J. Diff. Equ., (2018), DOI: 10.1016/j.jde.2018.10.012.


\bibitem{blanco} G. Rodriguez-Blanco, On the Cauchy problem for the Camassa--Holm equation, Nonlinear Anal., 46, 309--327 (2001).





\bibitem{taylor} M. E. Taylor, Partial Differential Equations I, 2nd edition, Springer, (2011).

\bibitem{tiancmp2005} L. Tian, G. Gui and Y. Liu, On the well-posedness problem and the scattering
problem for the Dullin-Gottwald-Holm equation, Commun. Math. Phys., vol. 257, 667--701, (2005).

\bibitem{yindcdc2004} Z. Yin, Well-posedness, blowup, and global existence for an integrable shallow water equation, Dis. Cont. Dyn. Sys., vol. 11, 393--411, (2004).

\bibitem{wujmp2006} S. Wu, Z. Yin, Blow-up, blow-up rate and decay of the solution of the weakly dissipative Camassa-Holm equation, J. Math. Phys., vol 47, (2006), paper 013504.

\bibitem{wujde2009} S. Wu and Z. Yin, Global existence and blow-up phenomena for the weakly dissipative Camassa--Holm equation, J. Diff. Equ., vol. 246, 4309--4321, (2009).

\bibitem{zhangjmp2015} Z. Zhang, J. Huang and M. Sun, Blow-up phenomena for the weakly dissipative Dullin-Gottwald-Holm equation revisited, J. Math. Phys. 56, 092701 (2015).

\bibitem{zhoujfa2007} Y. Zhou, Blow-up of solutions to the DGH equation, J. Func. Anal., vol. 250, 227--248, (2007).

\end{thebibliography}
\end{document}